\let\mathcal\mathscr
\numberwithin{equation}{section}
\renewcommand{\phi}{\varphi}
\renewcommand{\rho}{\varrho}
\newcommand{\0}{\mathbf{0}}
\newcommand{\PP}{\mathbb{P}}
\newcommand{\ZZ}{\mathbb{Z}}
\newcommand{\ZZp}{\mathbb{Z}_{\mathrm{prim}}}
\newcommand{\NN}{\mathbb{N}}
\newcommand{\QQ}{\mathbb{Q}}
\newcommand{\RR}{\mathbb{R}}
\renewcommand{\leq}{\leqslant}
\renewcommand{\le}{\leqslant}
\renewcommand{\geq}{\geqslant}
\newcommand{\x}{\mathbf{x}}
\newcommand{\y}{\mathbf{y}}
\renewcommand{\c}{\mathbf{c}}
\renewcommand{\u}{\mathbf{u}}
\newcommand{\z}{\mathbf{z}}
\newcommand{\ve}{\varepsilon}
\DeclareMathOperator{\meas}{meas}
\renewcommand{\le}{\left}
\newcommand{\ri}{\right}
\newtheorem{thm}{Theorem}[section]
\newtheorem{lem}[thm]{Lemma}
\theoremstyle{definition}
\numberwithin{equation}{section}
\begin{document}
\date{\today}

\title[Rational points on biquadratic hypersurfaces]{Counting  rational points on  \\
biquadratic hypersurfaces}
\author{T.D.\ Browning}

\address{IST Austria\\
Am Campus 1\\
3400 Klosterneuburg\\
Austria}
\email{tdb@ist.ac.at}

\author{L.Q. Hu}
\address{Dept.\ of Math.\ \\ Nanchang University\\
Nanchang\\ Jiangxi 330031\\ China}
\email{huliqun@ncu.edu.cn}

\thanks{2010  {\em Mathematics Subject Classification.} 11D45 (11G35, 11G50, 11P55, 14G25)}

\begin{abstract}
An asymptotic formula is established for the number of rational points of bounded anticanonical height which lie on a certain Zariski open subset of an arbitrary smooth biquadratic hypersurface in sufficiently many variables. The proof uses the Hardy--Littlewood circle method.
\end{abstract}

\maketitle

\thispagestyle{empty}
\setcounter{tocdepth}{1}
\tableofcontents

\section{Introduction}

Let $F(\mathbf{x};\mathbf{y})$ be a bihomogeneous form of bidegree $(d_1,d_2)$ defined over $\ZZ$, 
where  $\mathbf{x}=(x_1,\dots,x_n)$ and $\mathbf{y}=(y_1,\dots,y_n)$. 
This polynomial defines a biquadratic hypersurface 
$$
X:\quad  \{F(\mathbf{x};\mathbf{y})=0\} \subset \mathbb{P}^{n-1}\times \mathbb{P}^{n-1}.
$$
For a point $(x,y)\in X(\QQ)$, with $x=(x_1:\dots:x_n)$ and 
$y=(y_1:\dots:y_n)$ chosen so that 
 $\x,\y\in \ZZp^n$,
the anticanonical height is
$$
H(x,y)=|\mathbf{x}|^{n-d_1}|\mathbf{y}|^{n-d_2},
$$ 
where $|\cdot|:\RR^n\to \RR$ is the sup-norm. 
We assume that $X$ is a smooth Fano variety of dimension at least $3$
and that $X(\QQ)\neq \emptyset$.
Then, according to the Manin conjecture 
\cite{F}, there should exist a Zariski open subset $U\subset X$ and a constant $c> 0$ such that 
\begin{equation}\label{eq:manin}
\#\{(x,y)\in U(\QQ): H(x,y)\leq B\}\sim c B \log B,
\end{equation}
as $B\rightarrow \infty$. Furthermore, 
a conjectured value  for the constant  $c$ has been put forward by Peyre \cite{P}.

We now know that \eqref{eq:manin} is false in general. Recent work of Browning and Heath-Brown \cite{BBR} shows that 
for the smooth hypersurface
$$
\{x_1y_1^2+\dots+x_4y_4^2=0\} \subset \PP^3\times\PP^3,
$$
of bidegree $(1,2)$,
one needs to remove certain  ``thin sets'' of rational points in order to 
arrive at  the Manin--Peyre prediction. For larger values of $ n$, however,  we expect that 
it suffices to merely remove proper closed subvarieties, as allowed for in
\eqref{eq:manin}.

 When  $(d_1,d_2)=(1,1)$,  with $F(\mathbf{x};\mathbf{y})=x_1y_1+\dots+x_ny_n$, Robbiani \cite{R}  has confirmed \eqref{eq:manin} for all
$n\geq 4$.
This has been extended to $n\geq 3$ by Spencer \cite{spencer}.
For general bidegree $(d_1,d_2)$ the best result available is due to Schindler \cite{S2}, who has verified the  Manin--Peyre conjecture \eqref{eq:manin} for an appropriate open subset $U\subset X$,  provided that 
$$
n>3\cdot 2^{d_1+d_2}d_1d_2+1.
$$
(Here we have used \cite[Lemma 2.2]{S2} to note that $2n-\max\{\dim V_1^*, \dim V_2^*\}\geq n-1$ in \cite[Thm.~1.1]{S2}.)

The primary purpose of the present paper is to extend the range of $n$ in the special case of bidegree $(d_1,d_2)=(2,2)$. 
Let us denote
the left hand side  of \eqref{eq:manin}
 by  $N_U(B)$.
We write 
$F_\x$ for the quadratic form obtained from $F(\x;\y)$ when $\x$ is fixed and $y_1,\dots,y_n$ are  viewed as the  variables.
Note that  the determinant $\det(F_\x)$ of $F_\x$ depends only on $\x$. 
Similarly, we write 
$F_\y$ for the quadratic form in $x_1,\dots,x_n$ obtained from $F(\x;\y)$ by fixing $\y$.
We shall take $U=X\setminus Z$ in our work, where
\begin{equation}\label{eq:Z}
Z=\{(x,y)\in X: \det (F_{\mathbf{x}})\det (F_{\mathbf{y}})=0\}.
\end{equation}
We shall show that $U\neq \emptyset$ at the start of \S \ref{s:4}.
As consideration of the 
hypersurface 
$
x_1^2y_1^2+\dots +x_n^2y_n^2=0
$
shows, it is in fact necessary to work with the open subset $U$ that we have defined. Indeed, for this particular example, one sees that  rational points with  $x_1=0$ and $y_2=\dots=y_n=0$
contribute  $\gg B^{(n-1)/(n-2)}$ to $N_U(B)$. 

The following is our main result. 

\begin{thm}\label{t:1}
Let $X\subset \PP^{n-1}\times \PP^{n-1}$ be a smooth hypersurface of bidegree $(2,2)$, with
$n>35$.
 Let $U=X\setminus Z$, where $Z$ is given by \eqref{eq:Z}.
Then 
$$N_{U}(B)\sim c B\log B,$$
as $B\rightarrow \infty$, where 
 $c$  is the constant predicted by Peyre \cite{P}.
\end{thm}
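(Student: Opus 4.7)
The plan is to apply the Hardy--Littlewood circle method, exploiting that for any $\x$ with $\det(F_\x)\ne 0$ the form $F_\x(\y)$ is a non-singular quadratic form in $\y$ (and symmetrically in $\x$). This permits each of the two variable blocks to be handled by the sharp theory available for counting integer points on quadrics, rather than the much weaker generic degree-four theory used in Schindler's work.

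First I would introduce a smooth weight $w\in C_c^\infty(\RR^n\times\RR^n)$ approximating the anticanonical height ball $|\x|^{n-2}|\y|^{n-2}\le 1$ and supported uniformly away from the bad locus $Z$. Writing $P=B^{1/(2(n-2))}$, the problem reduces to analysing
\begin{equation*}
N_U(B;w)=\int_0^1 S(\alpha)\,\mathrm{d}\alpha, \qquad S(\alpha)=\sum_{\x,\y\in\ZZ^n} w\!\left(\tfrac{\x}{P},\tfrac{\y}{P}\right)\e(\alpha F(\x;\y)).
\end{equation*}
After Dirichlet dissection of $[0,1]$ into major arcs $\mathfrak{M}$ and minor arcs $\mathfrak{m}$, standard smooth-weight asymptotics on $\mathfrak{M}$ yield
\begin{equation*}
\int_\mathfrak{M} S(\alpha)\,\mathrm{d}\alpha = \mathfrak{S}(w)\,J_\infty(w)\,P^{2n-2}\log P + O(P^{2n-2}),
\end{equation*}
where $\mathfrak{S}$ is the singular series and $J_\infty$ the singular integral. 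The $\log P$ reflects the rank-two Picard group of $X$ coming from the two $\PP^{n-1}$-factors, and absolute convergence of $\mathfrak{S}$ is secured by Lang--Weil type estimates on the reductions of $X$ modulo primes.

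The main obstacle is the minor arc estimate. Applying Weyl differencing to $F$ as a generic degree-four form would lose a factor $2^4=16$ at each step, yielding only Schindler's threshold $n>193$. To sharpen this, I would exploit the bidegree $(2,2)$ structure twice. For each fixed $\x$ with $\det(F_\x)\ne 0$, apply Poisson summation in $\y$ to convert the inner sum $\sum_\y w(\y/P)\e(\alpha F_\x(\y))$ into a dual sum of length $\asymp qP^{-1}$ modulated by a Gauss sum involving $\det(F_\x)^{-1}\bmod q$; non-degeneracy of $F_\x$ delivers the expected $q^{n/2}$-saving. The remaining sum over $\x$ is then treated analogously via the quadratic form $F_\y$, saving a further $q^{n/2}$. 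A Cauchy--Schwarz step on the resulting bilinear expression in the two dual variables, combined with mean-value estimates for the ensuing Kloosterman-type sums, then forces the minor arc contribution to be smaller than the major arc main term as soon as $n>35$; the exact threshold is dictated by balancing the diagonal contribution from Cauchy--Schwarz against the measure of $\mathfrak{m}$ and the number of lattice points in the support of $w$.

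Finally, identification of $\mathfrak{S}(w)\cdot J_\infty(w)$ with Peyre's constant proceeds in the now-standard way: one matches each local factor of $\mathfrak{S}$ with a $p$-adic Tamagawa density, $J_\infty(w)$ with the archimedean Tamagawa measure on the height ball of $U(\RR)$, and the residual constants with the invariants $\alpha(X)$ and $\beta(X)$ attached to $\Pic(X)$. The exceptional set $Z$ is removed automatically by the support condition on $w$; since $U=X\setminus Z$ is non-empty, a partition-of-unity argument removes the smooth weight and yields $N_U(B)\sim cB\log B$, as required.
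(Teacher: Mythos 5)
Your high-level theme is right (exploit that each slice of $F$ is a non-singular quadratic form and feed this into the circle method), but the proposal has several structural gaps, and where it does not the route is genuinely different from the paper.

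First, and most seriously, the single-dilation set-up is wrong. You set $P=B^{1/(2(n-2))}$ and propose to count lattice points in (a smoothing of) the box $[-P,P]^n\times[-P,P]^n$ lying on $X$. Such a count gives $\sim C P^{2n-2}$ with \emph{no} logarithm: the $\log B$ in Manin's prediction arises precisely because the anticanonical ball $|\x|^{n-2}|\y|^{n-2}\leq B$ is a hyperbolic region, unbounded in each factor, and cannot be captured by one compactly supported weight. Your display already inserts a $\log P$ in the major-arc evaluation, attributed vaguely to the Picard rank, but there is no mechanism in a single box count that would produce it. The paper instead introduces the counting function $N(\mathcal{A};X,Y)$ for the box $[-X,X]^n\times[-Y,Y]^n$, uniformly in the two independent scales $X,Y$, and at the very end sums dyadically over all $(X,Y)$ with $XY\leq B^{1/(n-2)}$; the logarithm emerges from the number of dyadic indices.

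Second, because of that uniformity in $(X,Y)$ the heart of the paper is not a minor-arc estimate for the bidegree-$(2,2)$ form, but a dichotomy. When $X$ and $Y$ are comparable (say $u=\log X/\log Y\geq u_1$), the paper simply invokes Schindler's existing result for bihomogeneous forms, which already holds there with $n>8\max\{3,1/u+1\}+1$. When $X$ is much smaller than $Y$, the paper fixes $\x$ with $\det F_\x\neq 0$ and counts solutions of the quadratic form $F_\x(\y)=0$ in $|\y|\leq Y$; for this it first develops (Theorem~2.1) an asymptotic for quadrics via Heath-Brown's $\delta$-method \emph{with the dependence on $\|F_\x\|\ll|\x|^2$ made explicit}, and then sums over $\x$. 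The numerology $n>35$ is dictated by matching the two ranges at $u=u_1$, where $u_1$ in \eqref{eq:u1} comes from the explicit exponents in $\mathcal{E}_F(B)$. Your double-Poisson-plus-Cauchy--Schwarz sketch is a different pipeline; nothing in the proposal shows how it would reproduce Schindler's balanced-range result, nor where $35$ would fall out. The assertion that it ``forces the minor arc contribution to be smaller as soon as $n>35$'' is unsubstantiated, and deriving that threshold is precisely the content of the theorem.

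Third, the handling of the exceptional set $Z$ is off. You claim it is ``removed automatically by the support condition on $w$,'' but $Z=\{\det(F_\x)\det(F_\y)=0\}$ is a projective condition cutting through the whole affine cone; it cannot be avoided by restricting to a compact support in $\RR^{2n}$. The paper instead counts over $\mathcal{A}_1\times\mathcal{A}_2$ and shows (using Browning--Heath-Brown--Salberger for the number of $\y$ on the codimension-two variety $F_\x=\det F_\y=0$, and for the major arcs of Schindler's argument a simple measure bound) that the discarded points are of lower order. Finally, deducing the asymptotic for $N_U(B)$ from $N(\mathcal{A};X,Y)$ requires both the M\"obius inversion over $\gcd$'s and the dyadic hyperbolic summation, neither of which appears in your sketch. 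So while the guiding idea is correct, the crucial steps of the paper --- uniformity in two dilation scales, the explicit-coefficient quadric asymptotic of Section~2, the two-regime split, and the resulting numerology --- are all missing here.
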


By way of comparison, Schindler's result \cite{S2} yields the same conclusion for $n>193$. 
(In fact the latter result also  handles complete intersections in $\PP^{n_1-1}\times \PP^{n_2-1}$ when 
$\min\{n_1,  n_2\}$ is large enough.)

Our proof of Theorem \ref{t:1} uses the Hardy--Littlewood circle method and draws heavily on the strategies adopted in  the works of Browning--Heath-Brown \cite{BBR} and   Schindler \cite{S2} that we have already described. 
The main idea is to begin by proving an asymptotic formula for the number of (restricted) integer solutions  $(\x,\y)\in \ZZp^n\times \ZZp^n$ to the equation $F(\x;\y)=0$, with $|\x|\leq X$ and $|\y|\leq Y$, for arbitrary $X,Y\geq 1$. When $X$ and $Y$ have a similar size we follow \cite{S2} and invoke the main result in ancillary 
work of Schindler \cite{S1}. On the other hand, when $X$ is much smaller than  $Y$ we shall fix a suitable choice of $\x$ and count zeros of the resulting quadratic form in $\y$. It will be crucial to have to hand an  asymptotic formula for counting rational points of bounded height on quadrics, in which the dependence of the error term on the size of the coefficients is both explicit and as sharp as possible. 
Our primary tool here is the  modern form of the  circle method, due to Duke, Friedlander and Iwaniec \cite{D}, as later refined by 
 Heath-Brown \cite{HB}.
This part of the paper may be of independent interest and is addressed in the next section; it is here  that the saving over Schindler's work for  bidegree $(2,2)$ can be traced back to.
Finally, the case in which $Y$ is much smaller than $X$ follows by symmetry.

\subsection*{Acknowledgements}
The authors are grateful to Kevin Hughes for suggesting the proof of Lemma 
\ref{lem:sig}.
This research was supported by the Natural Science Foundation of China (grant no.\ 11761048) and supported by the China Scholarship Council. While working on this paper 
the  first author was  supported  by EPSRC grant \texttt{EP/P026710/1}.

\section{Quadrics and the circle method}

Let $n\geq 5$ and let $F\in\mathbb{Z}[x_1,\dots,x_n]$ be a non-singular  quadratic form. Thus there is a symmetric matrix $\mathbf{M}=(M_{ij})_{1\leq i,j\leq n}$, with elements in $\mathbb{Z}$, such that
$
F(\mathbf{x})=\mathbf{x}^T\mathbf{M}\mathbf{x}.
$
 We will write $\Delta_F=\det \mathbf{M}$ for its discriminant and
$$
\| F\|=\max_{1\leq i,j\leq n}|M_{ij}|
$$
for its height. 
In this section we want to study the sum
\begin{equation}\label{eq:NN}
N(w;B)=\sum_{\substack{\mathbf{x}\in \mathbb{Z}^n\\ F(\mathbf{x})=0}} w(B^{-1}\mathbf{x}),
\end{equation}
as $B\to \infty$, 
for a suitable class of smooth weight functions $w: \mathbb{R}^n\rightarrow \mathbb{R}$ with compact support. In \S \ref{s:3} we shall see how to deduce an asymptotic formula when 
 $w$ is taken to be the characteristic function of $[-1,1]^n$.

We require  an asymptotic formula in which the error term has an explicit 
dependence on  $\|F\|$.    This type of result is by no means new. 
For example, Browning and Dietmann 
 \cite[Prop. 1]{BD} obtain such an asymptotic formula for 
$N(w_F;B)$ for a very specific weight function $w_F$  that depends on the coefficients of the quadratic form. In our work, on the other hand, we want to work with as general a class of weight functions as possible. 

The leading constant in our result will involve  the 
singular integral
\begin{equation}\label{eq:sigm-inf}
\sigma_{\infty}(w;F)=\int_{-\infty}^{\infty}\int_{\mathbb{R}^{n}} w(\mathbf{x})e(-\theta F(\mathbf{x}))\mathrm {d}\mathbf{x}\mathrm {d}\theta
\end{equation}
and the singular series
$$
\mathfrak{S}(F)=\prod_p \lim_{r\rightarrow \infty}p^{-r(n-1)}\#\{\mathbf{x} \bmod{p^r}: F(\mathbf{x})\equiv 0 \bmod{p^r}\}.
$$
Both of these are convergent when $n\geq 5$.
We may now record the  main result of this section, as
follows.

\begin{thm}\label{t:2}
Let $n\geq 5$, let $\ve>0$ and 
let $\eta\in (0,\frac{1}{4})$. 
Assume that $w$ is a smooth weight function that is compactly support in $\mathbb{R}^n$ such that 
\begin{equation} \label{eq:eta-condition}
w(\mathbf{x})=0 \ \mathrm{for} \ |\mathbf{x}|\leq\eta.
\end{equation}
Then for  any $B\geq 1$ we have
$$
N(w;B)=\sigma_{\infty}(w;F)\mathfrak{S}(F)B^{n-2}+O(\|F\|^\ve B^\ve \mathcal{E}_F(B)),
$$
where
\begin{equation}\label{eq:def-E}
\mathcal{E}_F(B)=
\| F \|^{\frac{3n+\kappa_n}{4}-\frac{1}{2}}B^{\frac{n-2+\kappa_n}{2}}+
 \| F \|^{\frac{3n-\kappa_n}{4}-1}B^{\frac{n-\kappa_n}{2}}.
\end{equation}
and 
\begin{equation} \label{eq:kappa}
\kappa_n=\begin{cases}
           0, & 2\mid n, \\
           1, & 2\nmid n.
         \end{cases}
\end{equation}
\end{thm}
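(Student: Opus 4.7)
The natural plan is to apply the Heath--Brown \cite{HB} version of the Duke--Friedlander--Iwaniec delta-symbol circle method to $F(\x)=0$, while carefully tracking the dependence of every estimate on $\|F\|$. Introducing a free parameter $Q\geq 1$ to be optimised later and applying Poisson summation to the resulting sum over $\x$, one obtains a decomposition
$$
N(w;B)=c_Q B^n Q^{-2}\sum_{q=1}^\infty q^{-n}\sum_{\c\in \ZZ^n} S_q(\c)\, I_q(\c),
$$
where $S_q(\c)=\sum^*_{a\bmod q}\sum_{\mathbf{b}\bmod q} e(aF(\mathbf{b})/q+\c\cdot \mathbf{b}/q)$ is a complete exponential sum attached to $F$, and $I_q(\c)$ is an oscillatory integral involving $w(B^{-1}\x)$ and the truncation profile $h(q/Q,\cdot/Q^2)$ built into the delta-symbol; the $h$-cutoff effectively restricts the outer sum to $q\ll Q$.

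The $\c=\0$ terms produce the main term: $q^{-n}S_q(\0)$ recovers the local densities at primes $p\mid q$, and passage to the limit $Q\to\infty$ reproduces $\sigma_\infty(w;F)\mathfrak{S}(F)B^{n-2}$ by the standard argument, using absolute convergence of the two factors for $n\geq 5$. The tail $q>Q$ contributes an acceptable error provided the $\|F\|$-dependence is controlled, to which we return when optimising $Q$.

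For the $\c\neq\0$ contribution two ingredients are required. First, multiplicativity of $q\mapsto S_q(\c)$ together with Gauss-sum evaluation obtained by completing the square over $\mathbf{b}$ gives
$$
|S_q(\c)|\ll q^{(n+1+\kappa_n)/2}(q,\Delta_F)^{1/2}\|F\|^\ve,
$$
where the parity factor $\kappa_n$ enters because a non-degenerate $n$-ary Gauss sum modulo $p$ has absolute value $p^{n/2}$ when $n$ is even and $p^{(n+1)/2}$ when $n$ is odd. Second, on $\supp(w)$ one has $|\x|\geq \eta$, and the non-singularity of $\M$ then forces $|\nabla F(\x)|$ to be bounded below in terms of $\eta$ and $\Delta_F/\|F\|^{n-1}$; repeated integration by parts in $\x$ against the oscillation $e(-\c\cdot\x/q)$, combined with stationary-phase analysis of the $F$-oscillation, delivers rapid decay of $I_q(\c)$ in $|\c|B/q$ together with a polynomial control in $\|F\|$.

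Inserting these two bounds, summing dyadically in $q$ and in $\c$, and optimising $Q$ --- which amounts to balancing the tail of the $\c=\0$ contribution against the $\c\neq\0$ error in two separate regimes --- should yield precisely the two terms of $\mathcal{E}_F(B)$, with the first arising from $Q$ near $\|F\|^{1/2}B$ and the second from a smaller choice where the $\c\neq\0$ error dominates. The hardest step should be the oscillatory-integral analysis: obtaining a sharp, uniform-in-$F$ bound for $I_q(\c)$ with the correct exponent of $\|F\|$ requires carrying out the stationary-phase argument on the $\theta$-side (as in \cite{D,HB}) so that the Hessian determinant of $F$ contributes a factor of $\Delta_F^{1/2}$, rather than losing powers of $\|F\|$ to a wasteful $L^\infty$ estimate in $\x$. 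Any imprecision here immediately weakens the exponents in $\mathcal{E}_F(B)$ and would force $n$ in Theorem~\ref{t:1} to be taken larger.
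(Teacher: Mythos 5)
Your overall framework is the same as the paper's (Heath--Brown's delta-symbol identity, the choice $Q=\|F\|^{1/2}B$, the main term from $\mathbf{c}=\mathbf{0}$, and a stationary-phase bound for $I_q(\mathbf{c})$ carrying the factor $|\Delta_F|^{-1/2}$), but the treatment of the exponential sums --- which is exactly where the theorem's error term is won or lost --- contains a genuine error and omits the key mechanism. Your claimed pointwise bound $|S_q(\mathbf{c})|\ll q^{(n+1+\kappa_n)/2}(q,\Delta_F)^{1/2}\|F\|^{\varepsilon}$ is false: completing the square shows that for a prime $p\nmid 2\Delta_F$ and $2\mid n$ one has $|S_p(\mathbf{c})|=p^{n/2}\,|c_p(-\overline{4}F^{*}(\mathbf{c}))|$, where $F^{*}$ is the dual form and $c_p$ the Ramanujan sum, so if $p\mid F^{*}(\mathbf{c})$ then $|S_p(\mathbf{c})|=p^{n/2}(p-1)$, exceeding your bound by a factor of order $p^{1/2}$; since $F^{*}(\mathbf{c})$ can have arbitrarily large prime factors coprime to $2\Delta_F$, no factor $\|F\|^{\varepsilon}$ repairs this. (Your bound ignores the $a$-sum: square-root cancellation there occurs only when $q$ is coprime to $F^{*}(\mathbf{c})$.) The best bound that is valid uniformly is $q^{n/2+1+\varepsilon}\gcd(q^n,\Delta_F)^{1/2}$, and inserting it into your $\mathbf{c}\neq\mathbf{0}$ analysis yields an error of rough size $\|F\|^{3n/4+\varepsilon}B^{n/2+\varepsilon}$, strictly weaker than both terms of $\mathcal{E}_F(B)$ by a positive power of $\|F\|$, which would in turn force a larger $n$ in Theorem 1.1. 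The paper's route, which your sketch does not contain, is to estimate $\sum_{q\leq x}|S_q(\mathbf{c})|$ \emph{on average}: by multiplicativity and the explicit formula in terms of $F^{*}(\mathbf{c})$, moduli coprime to $2\Delta_F F^{*}(\mathbf{c})$ give only $q^{(n+\kappa_n)/2}$, while the exceptional moduli are sparse; this forces a case distinction between $F^{*}(\mathbf{c})\neq 0$ and $F^{*}(\mathbf{c})=0$, and the latter case additionally requires the uniform count $O(C^{n-2+\varepsilon})$ of integer points $|\mathbf{c}|\ll C$ on the dual quadric (Heath-Brown's theorem, with implied constant depending only on $n$ and $\varepsilon$). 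It is these two cases --- not two different choices of $Q$ --- that produce the two terms of $\mathcal{E}_F(B)$; the proof uses the single fixed $Q=\|F\|^{1/2}B$ throughout, and your picture of recovering the second term ``from a smaller choice of $Q$'' does not correspond to a valid argument, since one asymptotic formula must be extracted from one application of the identity.

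A smaller omission: to control the tail $q\gg Q$ of the $\mathbf{c}=\mathbf{0}$ contribution with the stated uniformity one also needs an upper bound for the singular integral of the shape $\sigma_{\infty}(w;F)\ll |\Delta_F|^{-1/n+\varepsilon}$ (the paper proves this via eigenvalue estimates and a Weyl-differencing argument), in addition to the averaged bound for $S_q(\mathbf{0})$; your appeal to ``absolute convergence for $n\geq 5$'' does not by itself give the required $\|F\|$-uniform error. The integral estimates you outline (rapid decay in $|\mathbf{c}|$ by integration by parts, and the $|\Delta_F|^{-1/2}$ saving from stationary phase) are consistent with what the paper uses, so the decisive gap is the exponential-sum averaging and the dual-quadric point count described above.
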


All the implied constants in this section are allowed to depend on the weight function $w$ and on $n$, $\eta$ and $\ve$, but on nothing else. Any other dependence will be indicated by a subscript.  Furthermore, we shall follow common convention and allow the value of $\ve$ to change at different parts of the argument.

It would not be very hard to extend 
Theorem \ref{t:2} to handle quadratic forms in $n= 4$ variables. This is achieved for diagonal $F$ in  work of Browning and Heath-Brown \cite[Thm.~4.1]{BBR}, where an even better error term is available through  exploiting the diagonal structure of the form.

Our proof of Theorem \ref{t:2} uses the smooth $\delta$-function variant of the circle method in the form developed by Heath-Brown \cite{HB}, which we proceed to  summarise here.
For any $q \in \mathbb{N}, \mathbf{c} \in \mathbb{Z}^n$ and $Q>1$, we put 
$$
S_q(\mathbf{c})=\sum_{\substack{a \bmod{ q} \\ 
\gcd(a,q)=1}}
\sum_{\mathbf{b} \bmod{q}} e_q(aF(\mathbf{b})+\mathbf{b}_\cdot \mathbf{c})
$$
and 
$$
I_q(\mathbf{c})=\int_{\mathbb{R}^n}w(B^{-1}\mathbf{x})h\le(\frac{q}{Q},\frac{F(\mathbf{x})}{Q^2}\ri)e_q(-\mathbf{c}_\cdot \mathbf{x})\mathrm{d}\mathbf{x},
$$
for a certain function $h:(0,\infty)\times \mathbb{R}\rightarrow \mathbb{R}$ described in  \cite[$\S 3$]{HB}. All we need to know here is that $h(x,y)$ is infinitely differentiable for $(x,y)\in(0,\infty)\times \mathbb{R}$, and that $h(x,y)$ is non-zero only for $x\leq \min\{1,2|y|\}$.
It now  follows from \cite[Theorem 2]{HB} that
\begin{equation}\label{eq:start}
N(w;B)=\frac{c_Q}{Q^2}\sum_{\mathbf{c}\in \mathbb{Z}^n}\sum_{q=1}^{\infty}q^{-n}S_q(\mathbf{c})I_q(\mathbf{c}),
\end{equation}
where the constant $c_Q$ satisfies $c_Q=1+O_N(Q^{-N})$, for any $N>0$. We shall take $Q=\sqrt{\| F\| B^2}$ in our work.

\subsection{The integral $I_q(\mathbf{c})$}

We want to use the bounds for the exponential integral $I_q(\mathbf{c})$ from  \cite[\S\S 7--8]{HB}. 
However, we shall require versions in which the dependence on the coefficients of $F $ is made explicit, as follows.

\begin{lem}\label{lem:integral}
Let $\mathbf{c} \in \mathbb{Z}^n$ be non-zero. Then
\begin{enumerate}
\item
for any $N>0$, we have
$$
I_q(\mathbf{c})\ll_N \frac{B^{n+1}}{q}\frac{\| F\|^{N/2+1/2}}{|\mathbf{c}|^{N}};
$$
\item
we have
$$
I_q(\mathbf{c})\ll \| F\|^{\frac{n}{2}}|\Delta_F|^{-\frac{1}{2}}B^{\frac{n}{2}+1}
q^{\frac{n}{2}-1}|\mathbf{c}|^{-\frac{n}{2}+1}.
$$
\end{enumerate}
\end{lem}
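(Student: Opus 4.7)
Both bounds will be proved by adapting the integral estimates from \cite[\S\S 7--8]{HB}, the essential new work being to track the explicit dependence on $\|F\|$, which does not appear in Heath-Brown's original treatment. In either case the first step will be the substitution $\mathbf{x}=B\mathbf{v}$, which uses $Q^2=\|F\|B^2$ to put the integral into the form
\begin{equation*}
I_q(\mathbf{c})=B^n\int_{\mathbb{R}^n}w(\mathbf{v})\,h\bigl(q/Q,\,F(\mathbf{v})/\|F\|\bigr)\,e(-B\mathbf{c}\cdot\mathbf{v}/q)\,d\mathbf{v}.
\end{equation*}

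For part (i), the plan is straightforward: fix a coordinate direction $v_i$ for which $|c_i|$ is maximal, and integrate by parts $N$ times in that direction. Each iteration gains a factor of $q/(B|\mathbf{c}|)$ from the exponential. The cost is the $N$-fold derivative of the integrand, which I would expand via Leibniz and Fa\`a di Bruno. The derivatives of $w$ contribute $O(1)$ each; the derivatives of $h(q/Q,F(\mathbf{v})/\|F\|)$ use the estimates for $h$ from \cite[Lemma~4]{HB}, in particular that the $b$-th $y$-derivative of $h(q/Q,\cdot)$ is $O((Q/q)^b)$ on its support $|y|\geq q/(2Q)$. Since $F$ is quadratic, only first and second partials of $F(\mathbf{v})/\|F\|$ contribute, each of size $O(1)$ on $\supp w$, so the $N$-fold derivative of the integrand is $O((Q/q)^N)$. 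Combining these inputs yields $|I_q(\mathbf{c})|\ll B^n(Q/(B|\mathbf{c}|))^N=B^n\|F\|^{N/2}/|\mathbf{c}|^N$, which implies the claimed bound on invoking $q\leq Q=\sqrt{\|F\|}\,B$.

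For part (ii), the plan is to exploit the quadratic nature of $F$ via a Gaussian integral / stationary phase analysis. Applying Fourier inversion in the second argument, I would write $h(q/Q,y)=\int_{\mathbb{R}}H(z)e(yz)\,dz$ with $H(z)=\int h(q/Q,y)e(-yz)\,dy$, and swap the order of integration to obtain
\begin{equation*}
I_q(\mathbf{c})=\int_{\mathbb{R}}H(z)J(z)\,dz,\qquad J(z)=B^n\int_{\mathbb{R}^n}w(\mathbf{v})\,e\bigl(zF(\mathbf{v})/\|F\|-B\mathbf{c}\cdot\mathbf{v}/q\bigr)\,d\mathbf{v}.
\end{equation*}
Completing the square in $\mathbf{v}$ around the stationary point $\mathbf{v}_*=B\|F\|M^{-1}\mathbf{c}/(2qz)$ reduces $J(z)$ to a Gaussian integral with Hessian $2zM/\|F\|$, from which the standard oscillatory integral estimate yields $|J(z)|\ll B^n\|F\|^{n/2}/(|z|^{n/2}|\Delta_F|^{1/2})$ whenever $\mathbf{v}_*\in\supp w$; outside this range, repeated integration by parts in $\mathbf{v}$ shows that $J(z)$ is negligible. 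The key point is that the bound $|M^{-1}\mathbf{c}|\geq|\mathbf{c}|/\|M\|\gg|\mathbf{c}|/\|F\|$ forces $|z|\gtrsim B|\mathbf{c}|/q$ in the relevant range; this $\|F\|$-cancellation is precisely what produces the factor $\|F\|^{n/2}$ (rather than merely $\|F\|$) in the final bound. Using $|H(z)|\ll 1$ and evaluating $\int_{|z|\gtrsim B|\mathbf{c}|/q}|z|^{-n/2}\,dz\ll (B|\mathbf{c}|/q)^{1-n/2}$ then delivers the claim after algebraic simplification.

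The main technical obstacle will be the stationary-phase analysis in part (ii): one must carefully verify that for $|z|$ small enough that $\mathbf{v}_*\notin\supp w$, the integral $J(z)$ is indeed negligible via integration by parts in $\mathbf{v}$, in the presence of both the quadratic term $zF(\mathbf{v})/\|F\|$ and the dominant linear term $-B\mathbf{c}\cdot\mathbf{v}/q$. Part (i) is a direct integration-by-parts argument, but still requires some Fa\`a di Bruno bookkeeping; its crucial input is the $(Q/q)^b$ growth of the $y$-derivatives of $h$ on the support.
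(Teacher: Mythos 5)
Your part (1) is essentially the argument that the paper has in mind: its own proof is a one-line citation of \cite[Lemma 4.2]{BBR}, which in turn reruns Heath-Brown's integration-by-parts estimate (\cite[Lemma 14]{HB}) while keeping the dependence on the coefficients explicit, exactly as you propose. One input is misquoted, however: the $N$-th $y$-derivative of $h(x,\cdot)$ is $O_N(x^{-1-N})$, not $O(x^{-N})$ --- your scaling is already false at $N=0$, since $h(x,y)$ itself has size up to $x^{-1}$. With the correct bound, $N$ integrations by parts cost $(Q/q)^{N+1}$ and you land on $B^n(Q/q)\bigl(Q/(B|\mathbf{c}|)\bigr)^N$, which is precisely the stated estimate (1); so nothing breaks, but your intermediate claim $B^n\|F\|^{N/2}|\mathbf{c}|^{-N}$ is stronger than what the method gives and should be dropped.

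For part (2) you take a genuinely different route from the one the paper leans on: Heath-Brown (and hence \cite{BBR}) diagonalizes the quadratic form over $\mathbb{R}$ and reduces to a product of one-dimensional $h$-weighted oscillatory integrals (\cite[\S 8]{HB}), the factor $\|F\|^{n/2}|\Delta_F|^{-1/2}=\prod_i(\|F\|/|\rho_i|)^{1/2}$ coming from the eigenvalues; you instead Fourier-dualize $h$ in its second argument and do an $n$-dimensional stationary-phase computation. The skeleton is viable and your algebra does reproduce (2), but two steps are genuine gaps as written. First, the bound $|H(z)|\ll 1$ (and the Fubini/inversion step $h(q/Q,y)=\int H(z)e(yz)\,\mathrm{d}z$) does not follow from the two properties of $h$ quoted in the paper or from the support/derivative bounds you cite: it amounts to $\int_{\mathbb{R}}|h(x,y)|\,\mathrm{d}y\ll 1$ uniformly in $x\le 1$, which is true but is a cancellation statement requiring Heath-Brown's finer estimates on $h$ (recall $h(x,\cdot)$ has sup norm of order $x^{-1}=Q/q$, so boundedness of its Fourier transform is not free). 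Second, your disposal of the range where $\mathbf{v}_*\notin\supp w$ by integration by parts against the linear phase only gains when $B|\mathbf{c}|/q\gg 1$; for $q\gtrsim B|\mathbf{c}|$ (e.g.\ $|\mathbf{c}|$ bounded and $q$ near $Q=\|F\|^{1/2}B$) the linear frequency is $O(1)$ and that argument gains nothing, so the word ``negligible'' is unjustified there. That regime can be rescued cheaply --- for $|z|\ll 1$ use $|J(z)|\ll B^n$ and for $|z|\gg1$ your stationary-phase bound, noting that the target in (2) is $\gg_n B^n$ once $q\gg B|\mathbf{c}|$ because $\|F\|^{n/2}|\Delta_F|^{-1/2}\gg_n 1$ --- but this case split must appear explicitly. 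If you prefer to avoid both issues, follow the cited route: diagonalize and apply Heath-Brown's one-dimensional lemmas, which is where the clean bound (2), with no logarithmic or $\varepsilon$ losses, comes from.
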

\begin{proof}
This is a straightforward adaptation to general $n$ from the case $n=4$ that is treated in \cite[Lemma 4.2]{BBR}.
\end{proof}

Our next task is to relate $I_{q}(\mathbf{0})$ to the singular integral, given by 
\eqref{eq:sigm-inf}. We shall closely follow the argument given in \cite[\S 4.3]{BBR}.
To begin with we note that
$$
I_q(\mathbf{0})=B^n\int_{\mathbb{R}^n}w(\mathbf{x})h(q/Q,G(\mathbf{x}))\mathrm{d}\mathbf{x},
$$
where $G(\mathbf{x})=\| F\|^{-1}F(\mathbf{x})$.
Arguing as in
\cite[\S 4.3]{BBR},
 we choose a smooth weight function $w_1: \mathbb{R}\rightarrow \mathbb{R}$ 
 such that 
$$
I_q(\mathbf{0})=B^n\int_{\mathbb{R}^n}w(\mathbf{x})w_1(G(\mathbf{x}))h(q/Q,G(\mathbf{x}))\mathrm{d}\mathbf{x},
$$
with  $f(t)=w_1(t)h(q/Q,t)$ being compactly supported with a continuous second derivative.
Let
\begin{equation}
\label{eq:KKK}
K(u;\delta)=\begin{cases}
                \delta^{-2}(\delta-|u|), &  |u|\leq \delta,\\
                0, &  |u|\geq \delta.
\end{cases}
\end{equation}
We can write  $K(u;\delta)$ in terms of its Fourier transform as
$$
K(u;\delta)=\int_{-\infty}^{\infty}e(\theta u)\le(\frac{\sin (\pi \delta \theta)}{\pi \delta \theta}\ri)^2 \mathrm{d}\theta.
$$
Then
\begin{equation}\label{eq:limit}
\lim_{\delta\downarrow 0}
\int_{-\infty}^{\infty}f(t)K(t-\tau; \delta)\mathrm{d}t=
f(\tau),
\end{equation}
uniformly in $\tau$. Thus 
\begin{equation}\label{eq:Iq0}
I_q(\mathbf{0})
=B^n\lim_{\delta\downarrow 0}\int_{-\infty}^{\infty}\le(\frac{\sin (\pi \delta \theta)}{\pi \delta \theta}\ri)^2J(\theta;w)L(\theta)\mathrm{d}\theta,
\end{equation}
with
$$
J(\theta;w)=\int_{\mathbb{R}^n}w(\mathbf{x})e(-\theta G(\mathbf{x}))\mathrm{d}\mathbf{x}
\quad \text{and}\quad
L(\theta)=\int_{-\infty}^{\infty}w_1(t)h(q/Q,t)e(\theta t)\mathrm{d}t.
$$

It follows from \cite[Lemma 4.11]{BBR} that 
\begin{equation}\label{eq:L411}
L(\theta)=1+O_N((q/Q)^N)+O_N((q/Q)^N|\theta|^N),
\end{equation}
for any $N\in \NN$, provided that 
 $q\ll Q$.
Next, 
if $w$ is smooth and supported in $[-\kappa,\kappa]^n$, then 
\cite[Lemma 3.1]{HBP} implies that 
$$
J(\theta;w)\ll_{\kappa}\prod_{i=1}^n \min\{1, |\theta \|F\|^{-1} \rho_i|^{-\frac{1}{2}}\},
$$
where $\rho_1,\dots, \rho_n$ are the eigenvalues of the matrix $\mathbf{M}$ that defines  $F$. Since $|\rho_1 \dots \rho_n|=|\Delta_F|$, it readily follows that 
\begin{equation}\label{eq:J-estimate}
J(\theta;w)\ll \min\{1, |\theta|^{-\frac{n}{2}}\|F\|^{\frac{n}{2}}|\Delta_F|^{-\frac{1}{2}}\}.
\end{equation}

We may now establish the following result.

\begin{lem}
\label{lem:i0}
 Let $N\in \ZZ$ such that  $0< N\leq n/2-1$. Then
$$
I_q(\mathbf{0})=B^n\left\{\sigma_{\infty}(w;F)\| F\|+O\left(\| F\|^{N+1+\ve}|\Delta_F|^{-\frac{N+1}{n}}\left(\frac{q}{Q}\right)^{N}\right)\right\}.
$$
\end{lem}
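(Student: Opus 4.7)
The plan is to start from the representation \eqref{eq:Iq0} and split $L(\theta) = 1 + (L(\theta) - 1)$, so that the contribution of the constant $1$ yields the main term and the remainder is controlled using the expansion \eqref{eq:L411}.

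For the main term, I would note that \eqref{eq:J-estimate} gives $J(\theta;w) \ll \min\{1, |\theta|^{-n/2}\|F\|^{n/2}|\Delta_F|^{-1/2}\}$, which is integrable in $\theta$ since $n\geq 5$. Combined with the uniform bound $|\sin(\pi\delta\theta)/(\pi\delta\theta)|^2 \leq 1$, dominated convergence justifies passing the limit $\delta\downarrow 0$ inside the integral, producing
\[
B^n \int_{-\infty}^{\infty} J(\theta;w)\,\mathrm{d}\theta.
\]
The substitution $\theta' = \|F\|^{-1}\theta$, together with $G = \|F\|^{-1}F$, converts this into $B^n\|F\|\sigma_{\infty}(w;F)$ via \eqref{eq:sigm-inf}.

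For the error term, I would apply \eqref{eq:L411} in the form $|L(\theta) - 1| \ll_N (q/Q)^N(1 + |\theta|^N)$, again use $|\sin(\pi\delta\theta)/(\pi\delta\theta)|^2 \leq 1$, and pass to the limit as above. The error is then bounded by
\[
B^n (q/Q)^N \int_{-\infty}^\infty J(\theta;w)(1 + |\theta|^N)\,\mathrm{d}\theta.
\]
Using \eqref{eq:J-estimate}, I split this integration at the threshold $T = \|F\| |\Delta_F|^{-1/n}$, where the two branches of the minimum coincide. On $|\theta|\leq T$ use $J\leq 1$; on $|\theta| > T$ use $J \leq |\theta|^{-n/2}\|F\|^{n/2}|\Delta_F|^{-1/2}$. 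For $N < n/2 - 1$ both contributions are $O(T^{N+1}) = O(\|F\|^{N+1}|\Delta_F|^{-(N+1)/n})$, while the $\int J\,\mathrm{d}\theta$ piece is of smaller order $O(T)$, which is absorbed into the larger bound once one observes (via Hadamard's inequality on $\mathbf{M}$) that $|\Delta_F|^{1/n} \ll \|F\|$, so in particular $T \gg 1$.

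The main obstacle, and the reason the $\|F\|^{\varepsilon}$ appears in the statement, is the boundary case $N = n/2 - 1$: here the upper-tail integral degenerates to the logarithmically divergent $\int_T^\infty \theta^{-1}\,\mathrm{d}\theta$. To handle this, I would exploit the rapid decay of $J(\theta;w)$ at infinity: since the non-singularity of $F$ together with \eqref{eq:eta-condition} ensures that $\nabla G$ is bounded away from $\mathbf{0}$ on $\supp w$, repeated integration by parts in $\mathbf{x}$ yields $J(\theta;w) \ll_{k} |\theta|^{-k}$ for any $k$, with the implicit constant depending polynomially on $\|F\|$. Truncating the $\theta$-range at a polynomial height in $\|F\|$ then produces a harmless factor $\log\|F\| \ll \|F\|^{\varepsilon}$. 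Assembling the main term with all error contributions yields the claimed asymptotic.
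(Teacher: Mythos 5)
Your proposal follows the same route as the paper's proof: substitute $L=1+(L-1)$ in \eqref{eq:Iq0}, pass the $\delta\downarrow 0$ limit through by dominated convergence (using \eqref{eq:J-estimate} and $n\ge 5$ for integrability), identify $\int J(\theta;w)\,\mathrm d\theta=\|F\|\sigma_\infty(w;F)$ by the change of variable $\theta\mapsto\|F\|^{-1}\theta$, and bound the $L-1$ contribution via \eqref{eq:L411} and the split at $T=\|F\||\Delta_F|^{-1/n}$. That is exactly the paper's argument, written out in slightly more detail.

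The one place you go beyond the paper is the boundary case $N=n/2-1$ (which can occur when $n$ is even). You are right that the integral $\int(1+|\theta|^{N})\min\{1,\|F\|^{n/2}|\theta|^{-n/2}|\Delta_F|^{-1/2}\}\,\mathrm d\theta$ diverges logarithmically there, so the paper's assertion ``for any $N\le n/2-1$'' is not literally justified at the endpoint, and the $\|F\|^{\ve}$ factor by itself does not repair a divergent integral. Your proposed remedy --- extra decay of $J(\theta;w)$ by repeated integration by parts, exploiting that \eqref{eq:eta-condition} together with non-singularity of $F$ bounds $|\nabla G|$ below by a quantity polynomial in $\|F\|^{-1}$ and $|\Delta_F|$, then truncating at a polynomial height in $\|F\|$ to pick up only a $\log\|F\|$ --- is sound and would indeed rescue the endpoint. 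It is worth observing, though, that the endpoint is never used: the only invocation of the lemma is with $N=n/2-2+\kappa_n/2$, which is strictly less than $n/2-1$ for both parities of $n$, so in the paper's application your strict-inequality computation (which needs no $\ve$ at all, thanks to $T\gg 1$ via Hadamard) already suffices.
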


\begin{proof}
We return to \eqref{eq:Iq0} and invoke 
\eqref{eq:L411} and 
\eqref{eq:J-estimate}.
The error terms in the former contribute
\begin{align*}
&\ll B^n\left(\frac{q}{Q}\right)^{N}\int_{-\infty}^{\infty}(1+|\theta|^{N})\min\bigg {\{}1, \frac{\|F\|^{\frac{n}{2}}}{|\theta|^{\frac{n}{2}}|\Delta_F|^{\frac{1}{2}}}\bigg {\}}\mathrm{d}\theta\\
&\ll B^n\left(\frac{q}{Q}\right)^{N}\|F\|^{N+1+\ve}|\Delta_F|^{-\frac{N+1}{n}},
\end{align*}
for any $N\leq n/2-1$. 
The lemma easily follows since
$$
\int_{-\infty}^{\infty}\le(\frac{\sin (\pi \delta \theta)}{\pi \delta \theta}\ri)^2J(\theta;w)\mathrm{d}\theta\rightarrow \int_{-\infty}^{\infty}J(\theta;w)\mathrm{d}\theta,
$$
as $\delta\downarrow 0$.  
\end{proof}

We will also need a good upper bound for the singular integral $\sigma_\infty(w;F)$ and this is provided by the following result. 

\begin{lem}\label{lem:sig}
Suppose either that $w$ is a smooth weight function of compact support, or that $w$ is the characteristic function
of $[-\kappa,\kappa]^n$ for some $\kappa>0$. Then
$
 \sigma_{\infty}(w;F)\ll_{\kappa} |\Delta_F|^{-\frac{1}{n}+\ve}
$, for any $\ve>0$.
\end{lem}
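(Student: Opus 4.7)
The plan is to carry out the $\theta$-integration in the definition \eqref{eq:sigm-inf}, invoking the stationary-phase bound \eqref{eq:J-estimate} on the inner integral.

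\emph{Smooth case.} Suppose first that $w$ is smooth of compact support. Writing $F(\x)=\|F\|\,G(\x)$ and substituting $u=\|F\|\theta$ in \eqref{eq:sigm-inf} gives
\begin{equation*}
\sigma_\infty(w;F)=\|F\|^{-1}\int_{-\infty}^{\infty}J(u;w)\,\d u.
\end{equation*}
Applying \eqref{eq:J-estimate} and splitting the range at the crossover $|u|_0=\|F\|\,|\Delta_F|^{-1/n}$ (where the two arguments of the minimum balance), a routine calculation, valid since $n\geq 5>2$, gives
\begin{equation*}
\int_{-\infty}^{\infty} J(u;w)\,\d u\ll \|F\|\,|\Delta_F|^{-1/n}.
\end{equation*}
Hence $\sigma_\infty(w;F)\ll |\Delta_F|^{-1/n}$, which is in fact slightly sharper than claimed (no $|\Delta_F|^\ve$ is needed).

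\emph{Characteristic function case.} To reduce the rough weight $w_0=\mathbf{1}_{[-\kappa,\kappa]^n}$ to the smooth case, I would rely on the positivity of $\sigma_\infty(\,\cdot\,;F)$. Formally interchanging the order of integration identifies
\begin{equation*}
\sigma_\infty(w;F)=\int_{\RR^n} w(\x)\,\delta(F(\x))\,\d\x
=\lim_{\eta\downarrow 0}\frac{1}{2\eta}\int_{\{\x:\,|F(\x)|\leq \eta\}} w(\x)\,\d\x,
\end{equation*}
making $\sigma_\infty(\,\cdot\,;F)$ manifestly monotone on nonnegative weights. This identity can be justified rigorously by inserting the Fej\'er-type regularizing kernel $K(\,\cdot\,;\delta)$ from \eqref{eq:KKK} into the $\theta$-integral and letting $\delta\downarrow 0$, in the same spirit as the passage to \eqref{eq:limit}. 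Granted this, fix once and for all (depending only on $\kappa$) a nonnegative $w^+\in C_c^\infty(\RR^n)$ with $w^+\geq 1$ on $[-\kappa,\kappa]^n$ and $\supp(w^+)\subset [-2\kappa,2\kappa]^n$. Monotonicity and the smooth case yield
\begin{equation*}
\sigma_\infty(w_0;F)\leq \sigma_\infty(w^+;F)\ll_\kappa |\Delta_F|^{-1/n},
\end{equation*}
as desired.

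The main obstacle is the rigorous justification of the positive-functional representation of $\sigma_\infty(w_0;F)$ when $w_0$ is merely a characteristic function: one must control the tail of the $\theta$-integration for a non-smooth weight. This can be handled by truncating the $\theta$-range and invoking the same regularization by $K(\,\cdot\,;\delta)$ already employed earlier in the section, using that $K\geq 0$ and $\int K=1$ to preserve positivity throughout the limit. All remaining estimates are elementary.
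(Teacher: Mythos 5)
Your treatment of the smooth case is correct and matches the paper's: substitute $u=\|F\|\theta$, apply \eqref{eq:J-estimate}, and integrate, balancing at $|u|\asymp\|F\||\Delta_F|^{-1/n}$ to get the clean bound $|\Delta_F|^{-1/n}$.

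Your characteristic-function case takes a genuinely different route. The paper attacks $J(\theta;w_0)$ directly: a Weyl-differencing (Cauchy--Schwarz) step reduces $|J(\theta;w_0)|^2$ to a volume estimate for the set where $|\nabla G(\z)|\leq 1/|\theta|$, giving $J(\theta;w_0)\ll_\kappa\log^{n/2}(2+|\theta|)\min\{1,|\theta|^{-n/2}\|F\|^{n/2}|\Delta_F|^{-1/2}\}$, which both establishes the $\theta$-decay needed for convergence and, upon integration, the bound (the $\log$ factor accounts for the $|\Delta_F|^\ve$). You instead propose representing $\sigma_\infty(\cdot;F)$ as a positive functional via the Fej\'er kernel $K(\cdot;\delta)$ and then sandwiching $w_0\leq w^+$ for a fixed smooth majorant $w^+$. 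This is an attractive idea: it bypasses Weyl differencing and would in fact apply to any bounded nonnegative weight majorized by a smooth compactly supported one, and it sharpens $|\Delta_F|^{-1/n+\ve}$ to $|\Delta_F|^{-1/n}$.

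However, there is a genuine gap, and you are right to flag it as the ``main obstacle,'' though you understate how serious it is. Your passage from the iterated definition \eqref{eq:sigm-inf} to the positive representation $\lim_{\delta\downarrow 0}\int w_0(\x)K(-F(\x);\delta)\,\d\x$ requires interchanging $\lim_{\delta\downarrow 0}$ with the $\theta$-integration. With $(\sin(\pi\delta\theta)/(\pi\delta\theta))^2\to 1$ pointwise and $\leq 1$, the natural route is dominated convergence, which needs $J(\theta;w_0)\in L^1(\d\theta)$ --- but proving that is precisely what the Weyl-differencing estimate in the paper accomplishes, and you have no independent substitute. Indeed the paper's own derivation of the positive representation \eqref{eq:3.1} explicitly invokes ``arguing as in the proof of Lemma~\ref{lem:sig}'' to justify Fubini, so appealing to \eqref{eq:3.1} here would be circular. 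To close the gap you would need at minimum a qualitative decay bound $J(\theta;w_0)\ll_{\kappa,F}|\theta|^{-1-\epsilon}$ for the rough weight (a stationary-phase or van der Corput estimate, not depending on $\Delta_F$ in a useful way), which is a nontrivial extra ingredient you do not supply; merely ``truncating the $\theta$-range'' does not resolve it, because the Dirichlet kernel arising from sharp truncation is not nonnegative, and the Fej\'er kernel requires exactly the integrability you lack.
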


\begin{proof}
We need to study 
$$
J(\theta;w)=\int_{\mathbb{R}^n}w(\mathbf{x})e(-\theta G(\mathbf{x}))\mathrm{d}\mathbf{x},
$$
for the weights $w$ in the statement of the lemma.
If $w$ is smooth and supported in $[-\kappa,\kappa]^n$, then we have \eqref{eq:J-estimate}, where the implied constant is allowed to depend on $\kappa$.
In this case the  statement  follows on integrating over $\theta$.

Next,  if $w$ is the characteristic function of $[-\kappa,\kappa]^n$, then 
$$
J(\theta;w)=\int_{[-\kappa,\kappa]^n}e(-\theta G(\mathbf{x}))\mathrm{d}\mathbf{x}.
$$
We adopt a Weyl differencing approach to estimating this integral. Thus
\begin{align*}
|J(\theta;w)|^2
&\leq 
\int_{|\z|\leq 2\kappa} \left|
\int_{|\x|\leq \kappa, ~|\x+\z|\leq \kappa}
e(-\theta \x.\nabla G(\z))\mathrm d\x\right|\mathrm d\z\\
&\ll_\kappa
\int_{|\z|\leq 2\kappa} \prod_{i=1}^n
\min\left\{1, \left|\theta  \frac{\partial G(\z)}{\partial x_i}\right|^{-1}\right\}.
\end{align*}
Now 
$\frac{\partial G(\z)}{\partial x_i}\ll_\kappa 1$ for all $1\leq i\leq n$ and all $|\z|\leq 2\kappa$.
Let $\mathbf{r}=(r_1,\dots,r_n)\in \ZZ^n$
 such that 
$0\leq r_1,\dots,r_n\ll_\kappa |\theta|$  and let 
$$
S(\mathbf{r})=\left\{ |\z|\leq 2\kappa: 
\frac{r_i}{|\theta|}<
\left|  \frac{\partial G(\z)}{\partial x_i}\right| \leq \frac{r_i+1}{|\theta|}
\text{ for $1\leq i\leq n$}
\right\}.
 $$ 
Then 
\begin{align*}
|J(\theta;w)|^2
&\ll_\kappa \sum_{0\leq r_1,\dots,r_n\ll_\kappa |\theta|}
\meas\left( S(\mathbf{r})\right)
 \prod_{i=1}^n
\min\left\{1, \frac{1}{r_i}\right\}\\
&\ll_\kappa \log^n(2+|\theta|)
\meas\left( S(\mathbf{r})\right).
\end{align*}

We claim that 
$$
\meas\left( S(\mathbf{r})\right)\ll_\kappa \min\left\{1 , \frac{\|F\|^n}{|\Delta_F||\theta|^n}\right\}.
$$
This will suffice for the lemma, on integrating over $\theta$, since our work above then shows that 
$$
J(\theta;w)
\ll_\kappa 
 \min\{1, |\theta|^{-\frac{n}{2}}\|F\|^{\frac{n}{2}}|\Delta_F|^{-\frac{1}{2}}\}
\log^{\frac{n}{2}}(2+|\theta|).
$$
To check the claim, we note that the bound $\meas\left( S(\mathbf{r})\right)=O_\kappa(1)$ is trivial. To provide a second estimate, we note that if $S(\mathbf{r})=\emptyset$ then there is nothing to prove. On the other hand, 
if $\mathbf{t}\in S(\mathbf{r})$ then we can make the change of variables $\z\to \z-\mathbf{t}$.
Since the partial derivatives are linear forms it readily follows that 
\begin{align*}
\meas \left(S(\mathbf{r})\right)
&\ll
\meas \left\{ |\z|\leq 4\kappa: 
\left|  \nabla G(\z)\right| \leq \frac{1}{|\theta|}
\right\} \\ &\leq \frac{1}{|\Delta_G|} 
\meas \left\{ |\u|\leq  \frac{1}{|\theta|}
\right\} \\
&\ll \frac{1}{|\Delta_G||\theta|^n} .
\end{align*}
The claim follows on noting that $\Delta_G=\|F\|^{-n}\Delta_F$.
\end{proof}

\subsection{The exponential sums}

In this section we summarise what we  need to know about the exponential sum $S_q(\mathbf{c})$. First, it follows from  \cite[Lemma~23]{HB} that 
$S_{q_1q_2}(\mathbf{c})=S_{q_1}(\mathbf{c})S_{q_2}(\mathbf{c})$ for any coprime integers $q_1, q_2$.
The following upper bound is standard (and 
is the  case $k=0$ of \cite[Lemma 4]{BD}, for example).

\begin{lem}\label{lem:upper}
Let $\ve>0$. Then 
$
S_q(\mathbf{c})\ll q^{\frac{n}{2}+1+\ve}\gcd(q^n,\Delta_F)^{\frac{1}{2}}.
$
\end{lem}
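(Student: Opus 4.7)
The plan is to prove this by the classical combination of multiplicativity and one-variable Gauss sum estimates.

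First, by \cite[Lem.~23]{HB} one has $S_{q_1q_2}(\mathbf{c}) = S_{q_1}(\mathbf{c})S_{q_2}(\mathbf{c})$ whenever $\gcd(q_1,q_2)=1$, and the quantity $q\mapsto \gcd(q^n,\Delta_F)^{1/2}$ is itself multiplicative in $q$. It therefore suffices to establish the bound in the prime power case $q = p^k$.

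For odd $p$, I would turn to the inner sum
\[
T_a(\mathbf{c}) = \sum_{\mathbf{b}\bmod p^k}e_{p^k}\bigl(aF(\mathbf{b})+\mathbf{b}\cdot\mathbf{c}\bigr),\qquad \gcd(a,p)=1,
\]
and bound it by completing the square. Since $p$ is odd, a unimodular $\ZZ_p$-change of variables brings $\mathbf{M}$ into diagonal form $\mathrm{diag}(p^{e_1}u_1,\dots,p^{e_n}u_n)$ with $u_i\in\ZZ_p^\times$ and $e_1+\cdots+e_n = v_p(\Delta_F)$. Under this change of variables $T_a(\mathbf{c})$ factors as a product of $n$ one-variable quadratic Gauss sums with linear twists. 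Each factor is bounded by $p^{(k+e_i)/2}$ when $e_i \leq k$ and by $p^k$ when $e_i > k$, via the standard evaluation of one-variable Gauss sums for odd $p$. Taking the product and using the elementary bound $\sum_i \min(k,e_i)\leq \min(kn,v_p(\Delta_F))$ yields
\[
|T_a(\mathbf{c})|\leq p^{kn/2}\gcd(p^{kn},\Delta_F)^{1/2}.
\]
Summing trivially over the at most $p^k$ admissible values of $a$ then gives $|S_{p^k}(\mathbf{c})|\ll p^{k(n/2+1)}\gcd(p^{kn},\Delta_F)^{1/2}$, which is the asserted bound for $q = p^k$.

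The main point requiring care is the prime $p=2$: here $2$ is not invertible modulo $2^k$, naive completion of the square is unavailable, and the $\ZZ_2$-Jordan decomposition of $\mathbf{M}$ can contain $2\times 2$ hyperbolic or elliptic blocks alongside $1\times 1$ diagonal pieces. These blocks must be handled separately, but the resulting exponential sums obey bounds of the same shape as in the odd case, and any bounded constants that arise are comfortably absorbed into the $q^\ve$ factor of the statement. Since the full calculation is routine and appears in several standard references, one could alternatively quote \cite[Lem.~4]{BD} with $k=0$ directly, as the authors do.
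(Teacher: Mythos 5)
Your proposal is correct, and it is worth noting that the paper itself gives no argument at all for this lemma: it simply observes that the bound is the case $k=0$ of \cite[Lemma 4]{BD}. What you have written is essentially the standard proof lying behind that citation. Your reduction to prime powers is legitimate, since the multiplicativity $S_{q_1q_2}(\mathbf{c})=S_{q_1}(\mathbf{c})S_{q_2}(\mathbf{c})$ is exactly what the paper extracts from \cite[Lemma 23]{HB}, and $q\mapsto \gcd(q^n,\Delta_F)$ is multiplicative. For odd $p$ your computation is complete: the $\ZZ_p$-diagonalisation $\mathbf{M}\sim\mathrm{diag}(p^{e_1}u_1,\dots,p^{e_n}u_n)$ with $\sum_i e_i=v_p(\Delta_F)$ is available because $p$ is odd, each one-variable twisted Gauss sum is bounded by $p^{(k+\min(k,e_i))/2}$, and $\sum_i\min(k,e_i)\le\min(kn,v_p(\Delta_F))$ gives $|T_a(\mathbf{c})|\le p^{kn/2}\gcd(p^{kn},|\Delta_F|)^{1/2}$, after which the trivial sum over the at most $p^k$ values of $a$ yields the claim at $p^k$ with absolute constant $1$. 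Consequently your argument in fact proves the bound with the $q^{\ve}$ replaced by a constant depending only on $n$ (only $p=2$ contributes a bounded factor), which is stronger than what the lemma asserts and is what the uniformity requirements of the paper need. The one genuinely deferred point is $p=2$, where the Jordan splitting into $1\times 1$ and $2\times 2$ blocks is required; this is standard (and slightly eased here because $\mathbf{M}$ is taken integral, so $F$ is classically integral), and in any case your fallback of quoting \cite[Lemma 4]{BD} with $k=0$ is precisely the route the paper takes. Either route is acceptable; yours has the advantage of being self-contained.
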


The dual quadratic form $F^*$ is the quadratic form whose matrix is $\Delta_F\mathbf{M}^{-1}$. When $p\nmid 2\Delta_F$ we may think of $F^*$ as being defined modulo $p^r$ for any $r\in \NN$. 
The following explicit formula follows from applying modulus signs in  work of  Browning and Munshi \cite[Lemma 15]{BM}.

\begin{lem}\label{lem:pr}
Let $p\nmid 2\Delta_F$ be a prime and let $r\in \NN$. Then 
$$
|S_{p^r}(\mathbf{c})|=\begin{cases}
                        p^{\frac{nr}{2}}|c_{p^r}(-\overline{4}F^{*}(\mathbf{c}))|, & 2\mid nr, \\
                        p^{\frac{nr}{2}}|g_{p^r}(-\overline{4}F^{*}(\mathbf{c}))|, & 2\nmid nr,
                      \end{cases}
$$
where $c_{p^r}(\cdot)$ is  the Ramanujan sum and 
$$
g_{p^r}(\cdot)=\sum_{a \bmod{p^r}}
\left(\frac{a}{p^r}\right)
e_{p^r}(a\cdot ).
$$
\end{lem}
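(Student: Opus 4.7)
The plan is to compute $S_{p^r}(\mathbf{c})$ explicitly by first evaluating the inner sum over $\mathbf{b}$ as a twisted Gauss sum of the quadratic form $aF$, and then summing over the units $a \bmod p^r$. Since $p \nmid 2\Delta_F$, both $2a$ and $\mathbf{M}$ are invertible modulo $p^r$ and the adjugate $\Delta_F\mathbf{M}^{-1}$ is integral, so completing the square in the shift $\mathbf{b}\mapsto\mathbf{b}-\overline{2a}\,\Delta_F^{-1}(\Delta_F\mathbf{M}^{-1})\mathbf{c}$ will yield
\[
\sum_{\mathbf{b}\bmod p^r} e_{p^r}\bigl(aF(\mathbf{b})+\mathbf{b}\cdot\mathbf{c}\bigr) = e_{p^r}\bigl(-\overline{4a\Delta_F}\,F^*(\mathbf{c})\bigr)\,T(a),
\]
where $T(a)=\sum_{\mathbf{b}\bmod p^r} e_{p^r}(aF(\mathbf{b}))$ depends only on $F$ and $a$.

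Next I would evaluate $T(a)$ by diagonalising $F$ over $\ZZ/p^r\ZZ$, which is possible since $p\nmid 2\Delta_F$, so that $T(a)$ factorises into a product of $n$ standard one-variable Gauss sums $\sum_{b\bmod p^r} e_{p^r}(\mu b^2)$ with $(\mu,p)=1$. Each such sum has modulus $p^{r/2}$: it equals $p^{r/2}$ times a constant phase when $r$ is even, and equals $\bigl(\tfrac{\mu}{p}\bigr)p^{r/2}$ times a constant phase when $r$ is odd (via the standard $r\mapsto r-2$ recursion for odd $p$). Using that the diagonal entries have product $\equiv \Delta_F$ modulo squares, and that $\bigl(\tfrac{a^n}{p}\bigr)$ reduces to either $1$ or $\bigl(\tfrac{a}{p}\bigr)$ according to the parity of $n$, this will give $T(a)=p^{nr/2}\,\omega_F$ if $2\mid nr$ and $T(a)=p^{nr/2}\,\omega_F\,\bigl(\tfrac{a}{p}\bigr)$ if $2\nmid nr$, for some $\omega_F$ of modulus $1$ independent of $a$.

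Finally I would plug these expressions back into $S_{p^r}(\mathbf{c})$ and make the substitution $\alpha=\overline{4a\Delta_F}$, which is a bijection on units modulo $p^r$. Using $\bigl(\tfrac{4}{p}\bigr)=1$ and that the Legendre symbol of a unit coincides with that of its inverse, the $a$-sum will collapse to either $\sum_{(\alpha,p)=1} e_{p^r}(-\alpha F^*(\mathbf{c})) = c_{p^r}(-F^*(\mathbf{c}))$ or $\sum_{(\alpha,p)=1}\bigl(\tfrac{\alpha}{p}\bigr) e_{p^r}(-\alpha F^*(\mathbf{c})) = g_{p^r}(-F^*(\mathbf{c}))$, according to the parity of $nr$. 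Taking absolute values then yields the claimed formula, noting that both $|c_{p^r}|$ and $|g_{p^r}|$ are unchanged upon multiplying the argument by the unit $\overline{4}$, so $F^*(\mathbf{c})$ and $\overline{4}F^*(\mathbf{c})$ are interchangeable.

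The main point of care is the bookkeeping in the odd case: the factor $\bigl(\tfrac{a}{p}\bigr)$ produced by $T(a)$ must be tracked cleanly through the substitution $a \mapsto \overline{4a\Delta_F}$ so that exactly one Legendre symbol, pinned to the new variable $\alpha$, survives (the extra $\bigl(\tfrac{\Delta_F}{p}\bigr)$ that appears is absorbed into the phase and disappears on taking moduli). Modulo this care, the calculation is mechanical and the result is essentially the restatement with absolute values of the corresponding identity in \cite[Lemma~15]{BM}.
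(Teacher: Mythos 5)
Your proposal is correct, but it proceeds differently from the paper, which offers no computation at all: the paper simply observes that the formula follows ``by applying modulus signs'' to the explicit evaluation of $S_{p^r}(\mathbf{c})$ in Browning--Munshi \cite[Lemma 15]{BM}. What you have written is, in effect, a self-contained proof of that cited identity: completing the square (valid since $p\nmid 2\Delta_F$ makes $2a$ and $\mathbf{M}$ invertible modulo $p^r$) to extract the phase $e_{p^r}(-\overline{4a\Delta_F}F^*(\mathbf{c}))$, diagonalising over $\ZZ/p^r\ZZ$ to evaluate $T(a)=\sum_{\mathbf{b}}e_{p^r}(aF(\mathbf{b}))$ as $p^{nr/2}$ times a fixed phase, with a factor $\left(\frac{a}{p}\right)$ surviving exactly when $2\nmid nr$, and then the unit substitution $\alpha=\overline{4a\Delta_F}$ collapsing the $a$-sum to a Ramanujan sum or to $g_{p^r}$ according to parity. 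Your bookkeeping is sound: the extra symbol $\left(\frac{\Delta_F}{p}\right)$ arising in the odd case is a unimodular constant killed by the modulus signs, and since $|c_{p^r}(um)|=|c_{p^r}(m)|$ and $|g_{p^r}(um)|=|g_{p^r}(m)|$ for any unit $u$, the discrepancy between your argument $-F^*(\mathbf{c})$ and the stated $-\overline{4}F^*(\mathbf{c})$ is immaterial. The trade-off is the usual one: the paper's citation is shorter and defers all parity and normalisation issues to \cite{BM}, whereas your derivation makes the dependence on $a$, $n$, $r$ and $\Delta_F$ transparent at the cost of redoing a standard Gauss-sum calculation; both are acceptable, and your version would serve as a proof of the lemma independent of \cite{BM}.
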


We are now ready to study the asymptotic behaviour of the sum
$$
\Sigma_n(x;\mathbf{c})=\sum_{x/2<q\leq x}|S_{q}(\mathbf{c})|,
$$
for suitable vectors $\mathbf{c}\in \ZZ^n$.

\begin{lem}\label{lem:av1}
Let $\ve>0$ and assume that $F^{*}(\mathbf{c})\neq 0$. Then
$$\Sigma_n(x;\mathbf{c})\ll 
|\Delta_F|^{\frac{1}{2}+\ve}
x^{\frac{n+\kappa_n}{2}+1+\ve}
|\mathbf{c}|^\ve,
$$
where $\kappa_n$ is given by \eqref{eq:kappa}.
\end{lem}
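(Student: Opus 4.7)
The plan is to exploit the multiplicativity $S_{q_1q_2}(\mathbf{c}) = S_{q_1}(\mathbf{c})S_{q_2}(\mathbf{c})$ (valid for coprime $q_1,q_2$) to decouple the primes dividing $2\Delta_F$, where only the crude bound of Lemma \ref{lem:upper} is available, from those coprime to $2\Delta_F$, for which Lemma \ref{lem:pr} supplies an explicit evaluation in terms of Ramanujan or Gauss sums.

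First, I would factor each $q \in (x/2, x]$ uniquely as $q = q_0 q_1$ with $q_0 \mid (2\Delta_F)^\infty$ and $\gcd(q_1, 2\Delta_F) = 1$. For the bad factor, Lemma \ref{lem:upper} delivers $|S_{q_0}(\mathbf{c})| \ll q_0^{n/2+1+\ve}|\Delta_F|^{1/2}$, using the trivial bound $\gcd(q_0^n,\Delta_F) \leq |\Delta_F|$. For the good factor, Lemma \ref{lem:pr} applied at each prime-power divisor $p^r \| q_1$, combined with the classical bounds
\[
|c_{p^r}(M)| \leq \gcd(p^r, M), \qquad |g_{p^r}(M)| \leq p^{r/2}\gcd(p^r, M)^{1/2},
\]
together with the identity $\gcd(p^r, -\overline{4}F^*(\mathbf{c})) = \gcd(p^r, F^*(\mathbf{c}))$ (since $\gcd(p^r, 4) = 1$), yields the unified multiplicative bound
\[
|S_{q_1}(\mathbf{c})| \ll q_1^{(n+\kappa_n)/2}\gcd(q_1, F^*(\mathbf{c}))^{1-\kappa_n/2}.
\]

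Next, summing over $q_1 \leq x/q_0$ via the switch-of-order identity $\sum_{q\leq y}q^\alpha\gcd(q,N) \leq y^{\alpha+1}\tau(N)$ and the divisor bound $\tau(F^*(\mathbf{c})) \ll_\ve |\mathbf{c}|^\ve$ (absorbing any polynomial dependence on $\|F\|$ into the implied constant depending on $\ve$ and $n$), gives $\sum_{q_1} |S_{q_1}(\mathbf{c})| \ll (x/q_0)^{(n+\kappa_n)/2+1}|\mathbf{c}|^\ve$. Multiplying by the $q_0$-bound and summing over $q_0 \mid (2\Delta_F)^\infty$ with $q_0 \leq x$ leaves a residual sum $\sum_{q_0}q_0^{-\kappa_n/2+\ve}$; when $\kappa_n = 1$ this is a convergent Euler product bounded by $\prod_{p\mid 2\Delta_F}(1-p^{\ve-1/2})^{-1} \ll 2^{\omega(2\Delta_F)} \ll |\Delta_F|^\ve$, and when $\kappa_n = 0$ one invokes the smooth-number count $\#\{q_0 \mid (2\Delta_F)^\infty : q_0 \leq x\} \ll (x|\Delta_F|)^\ve$. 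Assembling the pieces then produces the claimed estimate.

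The principal technical hurdle is the Gauss sum bound $|g_{p^r}(M)| \leq p^{r/2}\gcd(p^r, M)^{1/2}$, required whenever $nr$ is odd, hence both $n$ and $r$ are odd. Unlike the Ramanujan sum, whose bound follows directly from its divisor expansion, this demands a careful analysis: writing $a = a_0 + pa_1$ and applying orthogonality in $a_1$ shows that $g_{p^r}(M)$ vanishes unless $v_p(M) = r-1$, in which case it equals $p^{r-1}$ times the primitive Gauss sum $g_p(M')$ of absolute value $\sqrt{p}$, which produces the stated bound with equality.
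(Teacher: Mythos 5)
Your argument is correct and produces the same exponents, but it uses a slightly different decomposition than the paper. The paper's proof takes the ``bad'' modulus to be $N=2\Delta_F F^{*}(\mathbf{c})$, so that in the coprime part one has $|c_{q_1}(-\overline{4}F^{*}(\mathbf{c}))|=1$ and $|g_{q_1}(-\overline{4}F^{*}(\mathbf{c}))|=\sqrt{q_1}$ trivially, reducing the good-part sum to $\sum_{q_1\le x/q_2}q_1^{(n+\kappa_n)/2}$, while the bad part over $q_2\mid N^{\infty}$ is handled exactly as you handle yours, via Lemma \ref{lem:upper} and the count $O(N^{\ve}x^{\ve})$ of such moduli. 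You instead keep only $2\Delta_F$ in the bad part and track the divisibility of $q_1$ by $F^{*}(\mathbf{c})$ explicitly; this forces you to prove the prime-power bounds $|c_{p^r}(M)|\le \gcd(p^r,M)$ and $|g_{p^r}(M)|\le p^{r/2}\gcd(p^r,M)^{1/2}$ (your evaluation of $g_{p^r}$ for odd $r$ --- vanishing unless $v_p(M)=r-1$, with modulus $p^{r-1/2}$ otherwise --- is correct, and odd $r$ is indeed the only case that arises when $2\nmid nr$), and your unified bound $|S_{q_1}(\mathbf{c})|\ll q_1^{(n+\kappa_n)/2}\gcd(q_1,F^{*}(\mathbf{c}))^{1-\kappa_n/2}$ checks out in both parities, as does the subsequent divisor-sum estimate. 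So the two routes differ only in where the factor $F^{*}(\mathbf{c})$ is absorbed: the paper's choice makes the coprime sum trivial at no extra cost, yours is more hands-on but equally valid.

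One point of wording needs fixing: you may not ``absorb polynomial dependence on $\|F\|$ into the implied constant,'' since all implied constants in this section must be independent of $F$. The divisor bound honestly gives $\tau(|F^{*}(\mathbf{c})|)\ll_{\ve}\|F\|^{(n-1)\ve}|\mathbf{c}|^{2\ve}$, so your final estimate carries an extra factor $\|F\|^{O(\ve)}$. This is the same slack that is implicit in the paper's own proof (its count of moduli $q_2\mid N^{\infty}$ is $O(N^{\ve}x^{\ve})$ with $N=2\Delta_F F^{*}(\mathbf{c})$, which also contains a $\|F\|^{O(\ve)}$), and it is harmless in the application because the error term in Theorem \ref{t:2} already carries $\|F\|^{\ve}B^{\ve}$; but you should record it as an explicit $\|F\|^{O(\ve)}$ factor rather than hiding it in a constant.
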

\begin{proof}
Define the non-zero integer $N=2\Delta_F F^{*}(\mathbf{c})$. Then
\begin{equation}
\label{eq:begin}
\Sigma_n(x;\mathbf{c})\leq \sum_{\substack{q_2\leq x \\ q_2|N^\infty}}
|S_{q_2}(\mathbf{c})|\sum_{\substack{q_1\leq x/q_2 \\
\gcd(q_1, N)=1}}
|S_{q_1}(\mathbf{c})|.
\end{equation}
Since $\gcd(q_1,N)=1$,  we  have 
$|c_{q_1}(-\overline{4}F^{*}(\mathbf{c}))|=1$ and 
$|g_{q_1}(-\overline{4}F^{*}(\mathbf{c}))|=\sqrt{q_1}$.
Hence Lemma \ref{lem:pr} yields $S_{q_1}(\mathbf{c})\ll q_1^{\frac{n+\kappa_n}{2}}$ and it follows from  Lemma \ref{lem:upper} that
\begin{align*}
\Sigma_n(x;\mathbf{c})&\ll x^{\frac{n+\kappa_n}{2}+1}
\sum_{\substack{q_2\leq x \\ q_2|N^\infty}}
q_2^{-\frac{n+\kappa_n}{2}-1}
|S_{q_2}(\mathbf{c})|\\
&\ll x^{\frac{n+\kappa_n}{2}+1} \sum_{\substack{q_2\leq x \\ q_2|N^\infty}}q_2^{-\frac{\kappa_n}{2}+\ve} \gcd(q_2^n,\Delta_F)^{1/2}.
\end{align*}
The statement of the lemma 
follows on noting that 
$\gcd(q_2^n,\Delta_F)^{1/2}\leq  |\Delta_F|^{\frac{1}{2}}$
and  there are $O(N^{\ve}x^{\ve})$ values of $q_2$ that contribute to the remaining sum.
\end{proof}

\begin{lem}\label{lem:av2}
Let $\ve>0$ and assume that $F^{*}(\mathbf{c})=0$. Then
$$
\Sigma_n(x;\mathbf{c})\ll|\Delta_{F}|^{\frac{1}{2}-\frac{1}{n}+\frac{\kappa_n}{2n}+\ve}
x^{\frac{n}{2}+2-\frac{\kappa_n}{2}+\ve}.
$$
\end{lem}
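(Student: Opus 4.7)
The plan is to mirror the splitting strategy used in the proof of Lemma \ref{lem:av1}. Every $q \in (x/2, x]$ factors uniquely as $q = q_1 q_2$ with $\gcd(q_1, 2\Delta_F) = 1$ and $q_2 \mid (2\Delta_F)^\infty$, and multiplicativity gives $|S_q(\mathbf{c})| = |S_{q_1}(\mathbf{c})| \cdot |S_{q_2}(\mathbf{c})|$. The key new input is that the hypothesis $F^*(\mathbf{c}) = 0$ allows one to evaluate the factors appearing in Lemma \ref{lem:pr} at the origin: one has $|c_{p^r}(0)| = \phi(p^r)$, while $g_{p^r}(0) = \sum_{a \bmod p^r} (a/p^r)$ equals $\phi(p^r)$ for $r$ even and vanishes for $r$ odd (as the Legendre symbol is a non-trivial character on $(\ZZ/p\ZZ)^*$). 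This yields the pointwise bound $|S_{q_1}(\mathbf{c})| \leq q_1^{n/2+1}$ whenever $\gcd(q_1, 2\Delta_F) = 1$, and moreover forces $S_{q_1}(\mathbf{c}) = 0$ for $n$ odd unless $q_1$ is a perfect square.

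Summing over $q_1$ in the dyadic range $(x/(2q_2), x/q_2]$ then produces $\sum_{q_1} |S_{q_1}(\mathbf{c})| \ll (x/q_2)^{n/2+2-\kappa_n/2}$, the saving of a factor $x^{1/2}$ for odd $n$ arising from the restriction to squares. Using Lemma \ref{lem:upper} to bound $|S_{q_2}(\mathbf{c})|$, the task reduces to establishing
\begin{equation*}
\sum_{\substack{q_2 \leq x \\ q_2 \mid (2\Delta_F)^\infty}} q_2^{-1+\kappa_n/2+\ve} \gcd(q_2^n, \Delta_F)^{1/2} \ll |\Delta_F|^{1/2 - 1/n + \kappa_n/(2n) + \ve}.
\end{equation*}

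The main obstacle is to obtain the improved exponent $1/2 - 1/n + \kappa_n/(2n)$ on $|\Delta_F|$, since the naive bound $\gcd \leq |\Delta_F|^{1/2}$ produces only $|\Delta_F|^{1/2+\ve}$. I would interpolate between the two elementary bounds $\gcd(q_2^n, \Delta_F) \leq q_2^n$ and $\gcd(q_2^n, \Delta_F) \leq |\Delta_F|$ to obtain $\gcd(q_2^n, \Delta_F)^{1/2} \leq q_2^{n\theta/2} |\Delta_F|^{(1-\theta)/2}$ for any $\theta \in [0,1]$, and make the optimal choice $\theta = (2 - \kappa_n)/n$. With this choice the total exponent of $q_2$ collapses to $\ve$, so the residual sum is controlled by the number of $q_2 \leq x$ with $q_2 \mid (2\Delta_F)^\infty$, which is $O((x|\Delta_F|)^\ve)$. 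The surviving prefactor $|\Delta_F|^{(1-\theta)/2}$ equals $|\Delta_F|^{1/2 - 1/n + \kappa_n/(2n)}$, giving precisely the exponent claimed.
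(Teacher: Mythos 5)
Your proposal is correct and follows essentially the same route as the paper: the same $q=q_1q_2$ splitting with $N=2\Delta_F$, the same evaluation of $S_{q_1}(\mathbf{c})$ via Lemma \ref{lem:pr} at $F^*(\mathbf{c})=0$ (with the restriction to square $q_1$ when $n$ is odd giving the $\kappa_n$ saving), and the same use of Lemma \ref{lem:upper} on the $q_2$-sum. The only difference is cosmetic: you make explicit the interpolation $\gcd(q_2^n,\Delta_F)^{1/2}\leq q_2^{n\theta/2}|\Delta_F|^{(1-\theta)/2}$ with $\theta=(2-\kappa_n)/n$, a step the paper's proof leaves implicit when passing to the exponent $\frac{1}{2}-\frac{1}{n}+\frac{\kappa_n}{2n}$.
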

\begin{proof}
We begin as in \eqref{eq:begin}, but now with the  non-zero integer $N=2\Delta_{F}$.
If $2\mid n$ then it  follows from Lemma \ref{lem:pr} that
$$
\sum_{\substack{q_1\leq x/q_2\\ \gcd(q_1,N)=1}}|S_{q_1}(\mathbf{c})| \leq \sum_{q_1\leq x/q_2} 
q_1^{\frac{n}{2}+1} \ll \left(\frac{x}{q_2}\right)^{\frac{n}{2}+2}.
$$
On the other hand, if $2\nmid n$ then only square values of $q_1$ contribute to the sum, so that
$$
\sum_{\substack{q_1\leq x/q_2\\ \gcd(q_1,N)=1}}|S_{q_1}(\mathbf{c})| \leq \sum_{\substack{
q_1\leq x/q_2\\
q_1 =\square}} 
q_1^{\frac{n}{2}+1} \ll \left(\frac{x}{q_2}\right)^{\frac{n}{2}+\frac{3}{2}}.
$$
Thus Lemma \ref{lem:upper} yields
\begin{align*}
\Sigma_n(x;\mathbf{c})\ll   x^{\frac{n}{2}+2-\frac{\kappa_n}{2}} \sum_{\substack{q_2\leq x\\ q_2\mid N^\infty}}\frac{\gcd(q_2^n,\Delta_{F})^{\frac{1}{2}}}{q_2^{1-\frac{\kappa_n}{2}-\ve}}
&\ll  x^{\frac{n}{2}+2-\frac{\kappa_n}{2}}\sum_{\substack{q_2\leq x \\ q_2|N^\infty}}q_2^\ve|\Delta_{F}|^{\frac{1}{2}-\frac{1}{n}+\frac{\kappa_n}{2n}}.
\end{align*}
The statement of the lemma follows since there are $O(N^{\ve}x^{\ve})$ choices for $q_2$.
\end{proof}

\subsection{Proof of Theorem 2.1}

Let $\ve>0$ and let $C=\|F\|^{\frac{1}{2}}Q^{\ve}$. Returning to \eqref{eq:start}, 
part (1) of Lemma \ref{lem:integral} gives
$$
N(w;B)=M(B)+E(B)+O(1),
$$
where
$$
M(B)=\frac{1}{Q^2}\sum_{q=1}^{\infty}q^{-n}S_q(\mathbf{0})I_q(\mathbf{0}), 
\quad
E(B)=\frac{1}{Q^2}\sum_{\substack{\mathbf{c}\in \mathbb{Z}^n \\ 0<|\mathbf{c}|\ll C}}\sum_{q=1}^{\infty}q^{-n}S_q(\mathbf{c})I_q(\mathbf{c}).
$$
Note that  $I_q(\mathbf{c})$ vanishes for $q\gg Q$, by the properties of the $h$-function.

Taking $N=n/2-2+\kappa_n/2$ in Lemma \ref{lem:i0}, 
 we obtain
\begin{align*}
M(B)
=~&\frac{\|F\|\sigma_{\infty}(w;F)B^n}{Q^2}\sum_{q=1}^{\infty}q^{-n}S_q(\mathbf{0})+O(T)\\
\quad &+O\le(\frac{\|F\|^{\frac{n}{2}-1+\frac{\kappa_n}{2}+\ve}|\Delta_F|^{-\frac{n-2+\kappa_n}{2n}}B^{n}}{Q^{\frac{n}{2}+1+\frac{\kappa_n}{2}}}\sum_{q\ll Q}
q^{-\frac{n}{2}-1+\frac{\kappa_n}{2}}|S_q(\mathbf{0})|\ri),
\end{align*}
where
$$
T=\frac{\|F\|\sigma_{\infty}(w;F)B^n}{Q^2}\sum_{q\gg Q}q^{-n}|S_q(\mathbf{0})|.
$$
Breaking into dyadic intervals, it  follows from Lemma \ref{lem:av2} that 
\begin{align*}
\sum_{q\gg Q}q^{-n}|S_q(\mathbf{0})| &\ll 
\sum_{\substack{x\gg Q\\ \text{$x$ dyadic}}} x^{-n}~\Sigma_n(x;\0)
\ll 
|\Delta_F|^{\frac{n-2+\kappa_n}{2n}+\ve} Q^{2-\frac{n}{2}-\frac{\kappa_n}{2}+\ve},
\end{align*}
Applying Lemma~\ref{lem:sig}, we therefore obtain
\begin{align*}
T &\ll \frac{\|F\||\Delta_F|^{-\frac{1}{n}+\ve}B^n}{Q^2}
\cdot
|\Delta_F|^{\frac{n-2+\kappa_n}{2n}+\ve} Q^{2-\frac{n}{2}-\frac{\kappa_n}{2}+\ve}\\
&\ll \|F\|^{-\frac{n}{4}+1+\ve}|\Delta_{F}|^{\frac{n-4+\kappa_n}{2n}}B^{\frac{n-\kappa_n}{2}+\ve}\\
&\ll
\|F\|^{\frac{n}{4}-1+\frac{\kappa_n}{2}+\ve}B^{\frac{n-\kappa_n}{2}+\ve},
\end{align*}
since  $|\Delta_F|\ll \| F \|^n$.

Similarly, 
\begin{align*}
\sum_{q\ll Q}
q^{-\frac{n}{2}-1+\frac{\kappa_n}{2}}|S_q(\mathbf{0})|
&\ll 
\sum_{\substack{x\ll Q\\ \text{$x$ dyadic}}} x^{-\frac{n}{2}-1+\frac{\kappa_n}{2}}~\Sigma_n(x;\0)\ll  |\Delta_{F}|^{\frac{n-2+\kappa_n}{2n}+\ve}Q^{1+\ve}.
\end{align*}
The remaining   error term in our estimate  $M(B)$ is therefore seen to be 
\begin{align*}
&\ll
\frac{\|F\|^{\frac{n}{2}-1+\frac{\kappa_n}{2}+\ve}|\Delta_F|^{-\frac{n-2+\kappa_n}{2n}}B^{n}}{Q^{\frac{n}{2}+1+\frac{\kappa_n}{2}}}
\cdot
 |\Delta_{F}|^{\frac{n-2+\kappa_n}{2n}+\ve}Q^{1+\ve}
\ll \|F\|^{\frac{n}{4}-1+\frac{\kappa_n}{4}+\ve}B^{\frac{n-\kappa_n}{2}+\ve}.
\end{align*}
It follows that 
$$
M(B)=\sigma_{\infty}(w;F)\mathfrak{S}(F)B^{n-2}+O\le(
\|F\|^{\frac{n}{4}-1+\frac{\kappa_n}{2}+\ve}B^{\frac{n-\kappa_n}{2}+\ve}\ri),
$$
where 
$\mathfrak{S}(F)=\sum_{q=1}^{\infty}q^{-n}S_q(\mathbf{0})$ is the usual singular 
series.

Now we deal with $E(B)$.  Putting $C=\|F\|^{\frac{1}{2}}Q^{\ve}$, we deduce from part (2) of Lemma \ref{lem:integral} 
that 
\begin{equation}
\begin{split}
\label{eq:EE}
E(B)
&\ll \frac{\| F \|^{\frac{n}{2}}}{Q^2 |\Delta_F|^{\frac{1}{2}}}B^{\frac{n}{2}+1}\sum_{\substack{\mathbf{c}\in \mathbb{Z}^n \\ 0<|\mathbf{c}|\ll C }}\sum_{q\ll Q}q^{-\frac{n}{2}-1}|S_q(\mathbf{c})||\mathbf{c}|^{-\frac{n}{2}+1}\\
&\ll \frac{\| F \|^{\frac{n}{2}}}{Q^2 |\Delta_F|^{\frac{1}{2}}}B^{\frac{n}{2}+1}\sum_{\substack{\mathbf{c}\in \mathbb{Z}^n \\ 
0<
|\mathbf{c}|\ll C }}
|\mathbf{c}|^{-\frac{n}{2}+1}
\sum_{\substack{x\ll Q\\ \text{$x$ dyadic}}} x^{-\frac{n}{2}-1}
\Sigma_n(x;\mathbf{c}).
\end{split}
\end{equation}
Let $E_1(B)$ denote the  contribution from vectors $\c$ for which  $F^{*}(\mathbf{c})\neq 0$ and let $E_2(B)$ be the contribution from non-zero vectors $\c$ for which $F^*(\c)=0$.
It follows from Lemma \ref{lem:av1} that
\begin{align*}
E_1(B)
&\ll \frac{\| F \|^{\frac{n}{2}}}{Q^2|\Delta_F|^{\frac{1}{2}}}B^{\frac{n}{2}+1}Q^{\frac{\kappa_n}{2}+\ve}|\Delta_F|^{\frac{1}{2}+\ve}\sum_{\substack{\mathbf{c}\in \mathbb{Z}^n \\ |\mathbf{c}|\ll C \\ F^{*}(\mathbf{c})\neq 0}}|\mathbf{c}|^{-\frac{n}{2}+1+\ve}\\
&\ll \| F \|^{\frac{3n}{4}-\frac{1}{2}+\frac{\kappa_n}{4}+\ve}B^{\frac{n-2+\kappa_n}{2}+\ve}.
\end{align*}
Next, starting from \eqref{eq:EE},  it follows from
Lemma~\ref{lem:av2} that 
\begin{align*}
E_2(B)
&\ll \frac{\| F \|^{\frac{n}{2}}}{Q^2 |\Delta_F|^{\frac{1}{2}}}B^{\frac{n}{2}+1}
|\Delta_{F}|^{\frac{n-2+\kappa_n}{2n}+\ve}Q^{1-\frac{\kappa_n}{2}+\ve}
\sum_{\substack{\mathbf{c}\in \mathbb{Z}^n \\ 0<|\mathbf{c}|\ll C \\ 
F^{*}(\mathbf{c})= 0}}|\mathbf{c}|^{-\frac{n}{2}+1+\ve}.
\end{align*}
According to a result of  Heath-Brown \cite[Thm.~2]{HB1},  there are $O(C^{n-2+\ve})$  vectors $\mathbf{c}\in \mathbb{Z}^n$ for which $|\mathbf{c}|\ll C$ and $F^{*}(\mathbf{c})=0$. It is important to emphasise here that the implied constant depends only on $\ve$ and $n$.
Hence 
\begin{align*}
E_2(B)
&\ll \frac{\| F \|^{\frac{n}{2}}}{Q^2 |\Delta_F|^{\frac{1}{2}}}B^{\frac{n}{2}+1}
|\Delta_{F}|^{\frac{n-2+\kappa_n}{2n}+\ve}Q^{1-\frac{\kappa_n}{2}+\ve}
C^{\frac{n}{2}-1+\ve}\\
&\ll \frac{\| F \|^{\frac{3n}{4}-\frac{1}{2}+\ve}B^{\frac{n}{2}+1+\ve}}{Q^{1+\frac{\kappa_n}{2}}}\\
&\ll \| F \|^{\frac{3n-\kappa_n}{4}-1+\ve}B^{\frac{n-\kappa_n}{2}+\ve}.
\end{align*}
This completes the proof of Theorem \ref{t:2}.

\subsection{Removing the smooth weights}\label{s:3}

The counting function $N(w;B)$  in \eqref{eq:NN} was 
studied  for a smooth  weight function $w$. In this section we show how it can be removed, following a line of attack found in \cite[\S 5.3]{BBR}.

We begin by analysing the 
 singular integral $\sigma_\infty(w;F)$ that appears in the  \eqref{eq:sigm-inf}.
Suppose that  $w$ is a smooth weight function of compact support or the characteristic function of 
$[-1,1]^n$. 
We note that
\begin{align*}
\|F\|\sigma_\infty(w;F)&=
\int_{-\infty}^{\infty}J(\theta;w)\mathrm{d}\theta \\&= \lim_{\delta\downarrow 0}\int_{-\infty}^{\infty}\le(\frac{\sin (\pi \delta \theta)}{\pi \delta \theta}\ri)^2 J(\theta;w)\mathrm{d}\theta\\
&= \lim_{\delta\downarrow 0}\int_{-\infty}^{\infty}\int_{\mathbb{R}^n}w(\mathbf{x})e(-\theta G(\mathbf{x}))\le(\frac{\sin (\pi \delta \theta)}{\pi \delta \theta}\ri)^2 \mathrm{d}\mathbf{x}\mathrm{d}\theta,
\end{align*}
in the notation that was introduced after Lemma \ref{lem:integral}.
Arguing as in the proof of Lemma \ref{lem:sig}, 
the conditions for Fubini's Theorem are satisfied, which therefore allows us to switch the two integrations. It therefore follows from \eqref{eq:limit} that
\begin{equation}\label{eq:3.1}
\sigma_\infty(w;F)=
\int_{-\infty}^{\infty}J(\theta;w)\mathrm{d}\theta = \lim_{\delta\downarrow 0}\int_{\mathbb{R}^n}w(\mathbf{x})K(-F(\mathbf{x});\delta)\mathrm{d}\mathbf{x}.
\end{equation}
In particular 
$\sigma_\infty(w;F)\geq 0$.

We are now ready to prove the following result.

\begin{lem}\label{lem:remove}
Let $n\geq 5$, let $\eta\in (0,\frac{1}{4})$ and let $\ve>0$. Let $w_0(\mathbf{x})$ be the characteristic function of the region $[-1,1]^n$. Suppose that $w_1(\mathbf{x})$ (respectively, $w_2(\mathbf{x})$) is a smooth weight function  that is supported in the region $\eta\leq |\mathbf{x}|\leq 1$ (respectively, $\eta\leq |\mathbf{x}|\leq 1+\eta$) and takes values in $[0,1]$ there. Suppose further that $w_1(\mathbf{x})=1$ whenever $2\eta\leq |\mathbf{x}|\leq 1-\eta$ (respectively, $w_2(\mathbf{x})=1$ whenever $2\eta\leq |\mathbf{x}|\leq 1$). Then 
\begin{align*}
N(w_0;B)=~&\big(1+O(\eta)\big)\sigma_{\infty}(w_0;F)\mathfrak{S}(F)B^{n-2}
+O(\|F\|^\ve B^\ve\mathcal{E}_F(B)),
\end{align*}
where  $\mathcal{E}_F( B)$ is given by \eqref{eq:def-E}.
\end{lem}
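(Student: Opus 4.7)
The plan is to sandwich $N(w_0;B)$ between $N(w_1;B)$ and $N(w_2;B)$ plus an auxiliary count $S_\eta$ of zeros of $F$ with small coordinates, apply Theorem~\ref{t:2} to the smooth weights $w_1,w_2$, and compare the resulting singular integrals to $\sigma_\infty(w_0;F)$ via the scaling built into \eqref{eq:3.1}.

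Let $S_\eta = \#\{\mathbf{x}\in\ZZ^n : F(\mathbf{x})=0,\ |\mathbf{x}|\leq 2\eta B\}$. The value conditions on $w_1, w_2$ yield the pointwise inequalities $w_1 \leq w_0$ and $w_0 \leq w_2 + \1_{[-2\eta,2\eta]^n}$, so
$$
N(w_1;B) \leq N(w_0;B) \leq N(w_2;B) + S_\eta.
$$
Theorem~\ref{t:2} delivers an asymptotic for each of $N(w_1;B)$ and $N(w_2;B)$ with error $O(\|F\|^\ve B^\ve \mathcal{E}_F(B))$.

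To match $\sigma_\infty(w_i;F)$ with $\sigma_\infty(w_0;F)$, I would exploit the homogeneity of $F$. The substitution $\mathbf{x}=c\mathbf{y}$ in \eqref{eq:3.1}, together with $K(c^2u;\delta)=c^{-2}K(u;\delta/c^2)$, yields the scaling identity
$$
\sigma_\infty(\1_{[-c,c]^n};F) = c^{n-2}\,\sigma_\infty(w_0;F), \qquad c>0.
$$
Since the kernel $K(\cdot;\delta)$ in \eqref{eq:KKK} is non-negative, \eqref{eq:3.1} makes $w\mapsto \sigma_\infty(w;F)$ linear and monotone on non-negative weights. The value conditions on $w_i$ (together with $\eta<\tfrac{1}{4}$) produce the pointwise bound
$$
|w_i(\mathbf{x})-w_0(\mathbf{x})| \leq \1_{[-2\eta,2\eta]^n}(\mathbf{x}) + \1_{[-(1+\eta),1+\eta]^n}(\mathbf{x}) - \1_{[-(1-\eta),1-\eta]^n}(\mathbf{x})
$$
for $i=1,2$, and combining this with the scaling identity gives
$$
|\sigma_\infty(w_i;F) - \sigma_\infty(w_0;F)| \leq \bigl((2\eta)^{n-2}+(1+\eta)^{n-2}-(1-\eta)^{n-2}\bigr)\sigma_\infty(w_0;F) \ll \eta\,\sigma_\infty(w_0;F),
$$
using $n\geq 5$. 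Hence $\sigma_\infty(w_i;F)=(1+O(\eta))\sigma_\infty(w_0;F)$ for $i=1,2$.

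For $S_\eta$ I would perform a dyadic decomposition. Fix a smooth bump $\tilde w$ supported in $\{\tfrac{1}{8}\leq |\mathbf{x}|\leq 1\}$ and satisfying $\tilde w \geq \1_{\{1/2\leq |\mathbf{x}|\leq 1\}}$, which meets the hypotheses of Theorem~\ref{t:2}. For each integer $j$ with $1\leq 2^j\leq 2\eta B$, the zeros of $F$ with $2^{j-1}<|\mathbf{x}|\leq 2^j$ are counted by $N(\tilde w; 2^j)$; applying Theorem~\ref{t:2} at scale $2^j$ and summing, together with the monotonicity of $\mathcal{E}_F$ in $B$ (which follows from the positive $B$-exponents in \eqref{eq:def-E}) and the elementary bound $(\eta B)^{n-2}\leq \eta B^{n-2}$ for $n\geq 5$, yields
$$
S_\eta \ll \eta\,\sigma_\infty(w_0;F)\mathfrak{S}(F)B^{n-2} + \|F\|^\ve B^\ve \mathcal{E}_F(B).
$$
Assembling the three pieces completes the proof. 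The principal technical obstacle lies in the singular-integral comparison, specifically in pushing the $\delta\downarrow 0$ limits in \eqref{eq:3.1} through characteristic functions of thin shells to extract the scaling identity; once that is in place, both the sandwich step and the dyadic bound for $S_\eta$ are routine bookkeeping.
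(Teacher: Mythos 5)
Your proposal is correct and follows essentially the same route as the paper: the sandwich $N(w_1;B)\leq N(w_0;B)\leq N(w_2;B)+\#\{|\x|\leq 2\eta B\}$, a dyadic application of Theorem \ref{t:2} to the small-box term, and the comparison $\sigma_\infty(w_i;F)=(1+O(\eta))\sigma_\infty(w_0;F)$ obtained from the representation \eqref{eq:3.1} together with the scaling $K(tu;\delta)=t^{-1}K(u;\delta/t)$, which is exactly the paper's argument. The only quibble is that your auxiliary weight $\tilde w$ as specified cannot exist (a smooth function supported in $\{\tfrac18\leq|\x|\leq 1\}$ cannot dominate $\1_{\{1/2\leq|\x|\leq 1\}}$ at the boundary $|\x|=1$); enlarging its support slightly, say to $\{\tfrac18\leq|\x|\leq 2\}$ as the paper in effect does by reusing $w_2$ with support up to $1+\eta$, repairs this, and the needed bound $\sigma_\infty(\tilde w;F)\ll\sigma_\infty(w_0;F)$ then follows from the monotonicity and box-scaling you have already set up.
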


\begin{proof}
Let
 $\eta<\frac{1}{4}$. 
Let $w_1$, $w_2$ be weights depending on $\eta$ alone, which  satisfy 
\eqref{eq:eta-condition} 
and the conditions of the  lemma.
Thus  $0\leq w_1(\mathbf{u}),w_2(\mathbf{u})\leq 1$
for all $\mathbf{u}\in \RR^n$ and both functions vanish when $|\mathbf{u}|\leq \eta$. The weight $w_1$ takes the value $1$ for $2\eta\leq |\mathbf{u}|\leq 1-\eta$ and vanishes for $|\mathbf{u}|\geq 1$; the weight $w_2$ takes the value $1$ for $2\eta\leq |\mathbf{u}|\leq 1$ and vanishes for $|\mathbf{u}|\geq 1+\eta$. In particular it is now clear that 
$$
0\leq w_1(\mathbf{u})\leq w_0(\mathbf{u})
\leq w_2(\mathbf{u}) +w_0(\mathbf{u}/(2\eta))
$$
for all $\mathbf{u}\in \RR^n$, whence
$$
N(w_1;B)\leq N(w_0;B)\leq N(w_2; B)+ N(w_0;2\eta B).
$$
 Theorem \ref{t:2} implies that 
\begin{align*}
N(w_0;2\eta B)&\ll 1+\sum_{j=0}^{\infty}N(w_2;2\eta B/2^j)\\
&\ll\sigma_{\infty}(w_2;F)\mathfrak{S}(F)(\eta B)^{n-2}+\|F\|^\ve B^\ve\mathcal{E}_F(\eta B)
\end{align*}
and 
$$
N(w_i;B)=\sigma_{\infty}(w_i;F)\mathfrak{S}(F)B^{n-2}+O(\|F\|^\ve B^\ve \mathcal{E}_F(B)),
$$
for $i=1,2$. 
Replacing $\sigma_{\infty}(w_i;F)$ by $\sigma_{\infty}(w_0;F)$ and bringing everything together, we 
conclude that 
\begin{align*}
N(w_0;B)=~&\sigma_{\infty}(w_0;F)\mathfrak{S}(F)B^{n-2}
+O(\sigma_1'\mathfrak{S}(F)B^{n-2})+O(\sigma_2'\mathfrak{S}(F)B^{n-2})\\
&\quad+O(\eta^{n-2} \sigma_{\infty}(w_2;F)\mathfrak{S}(F)B^{n-2})
+O(\|F\|^\ve B^\ve\mathcal{E}_F(B)),
\end{align*}
where
$\sigma'_i=|\sigma_{\infty}(w_i;F)-\sigma_{\infty}(w_0;F)|$ for $i=1,2$.

According to \eqref{eq:3.1} 
we have $\sigma_\infty(w_i;F)=\lim_{\delta \downarrow 0 } \sigma^{(\delta)}(w_i;F)$
for $i=0,1,2$, where
$$
\sigma^{(\delta)}(w_i;F)=
\int_{\mathbb{R}^n}w_i(\mathbf{x})K(-F(\mathbf{x});\delta)\mathrm{d}\mathbf{x}.
$$
Note that 
\begin{align*}
|\sigma^{(\delta)}(w_1;F)-\sigma^{(\delta)}(w_0;F)|
&\leq 
\int_{|\x|\leq 2\eta}
\hspace{-0.4cm}
K(-F(\mathbf{x});\delta)\mathrm{d}\mathbf{x}+
\int_{1-\eta\leq |\x|\leq 1}
\hspace{-0.4cm}
K(-F(\mathbf{x});\delta)\mathrm{d}\mathbf{x}
\end{align*}
and 
$$|\sigma^{(\delta)}(w_2;F)-\sigma^{(\delta)}(w_0;F)|
\leq \int_{|\x|\leq 2\eta}
\hspace{-0.4cm}
K(-F(\mathbf{x});\delta)\mathrm{d}\mathbf{x}
+
\int_{1\leq |\x|\leq 1+\eta}\hspace{-0.4cm}
K(-F(\mathbf{x});\delta)\mathrm{d}\mathbf{x}.
$$
Appealing to \eqref{eq:KKK}, it is clear that  $K(t u;\delta)=t^{-1}K(u;\delta/t)$
for any $t>0$. Hence
$$
\int_{|\x|\leq \lambda}K(-F(\mathbf{x});\delta)\mathrm{d}\mathbf{x}
=\lambda^{n-2}
\sigma^{(\delta/\lambda^2)}(w_0;F),
$$
for any $\lambda>0$.  Thus 
\begin{align*}
|\sigma^{(\delta)}(w_1;F)-\sigma^{(\delta)}(w_0;F)|
\leq ~&
(2\eta)^{n-2}
\sigma^{(\delta/4\eta^2)}(w_0;F)\\
&+\sigma^{(\delta)}(w_0;F)-(1-\eta)^{n-2}\sigma^{(\delta/(1-\eta)^2)}(w_0;F)
\end{align*}
and 
\begin{align*}
|\sigma^{(\delta)}(w_2;F)-\sigma^{(\delta)}(w_0;F)|\leq ~&
(2\eta)^{n-2}
\sigma^{(\delta/4\eta^2)}(w_0;F)\\
&+(1+\eta)^{n-2}\sigma^{(\delta/(1+\eta)^2)}(w_0;F)
-\sigma^{(\delta)}(w_0;F).
\end{align*}
Taking the limit $\delta \downarrow 0$ it is now clear that 
$\sigma_i'=O(\eta \sigma_{\infty}(w_0;F))$ for $i=1,2$, which  thereby leads to the statement of 
the lemma.
\end{proof}

\section{Back to biquadratic hypersurfaces }\label{s:4}

We now begin the proof of  Theorem \ref{t:1} in earnest.  
Recall our definition \eqref{eq:Z} of the closed set $Z$, where
$F_\x$ is the quadratic form in $\y$ obtained 
from $F(\x;\y)$ by fixing $\x$, and  $F_\y$ is obtained from $F(\x;\y)$ by instead fixing $\y$. 
We claim that $Z$ is a proper closed subset of $X$ when $X$ is smooth. 

To see this, we
suppose that $\det(F_\x)$ is identically zero and 
 write  $F(\mathbf{x};\mathbf{y})=\mathbf{y}^T \mathbf{M}(\mathbf{x})\mathbf{y}$ 
 for a suitable underlying matrix $\mathbf{M}(\x)$.
 We may view this as a quadratic form 
 over the field $K=\mathbb{Q}(x_1,\dots,x_n)$.  Since it is  diagonalisable over $K$, we may assume without loss of generality that
$$
F(\mathbf{x};\mathbf{y})=Q_1y_1^2+\dots+Q_ny_n^2,
$$
for rational functions $Q_1,\dots,Q_n\in K$, with $Q_n=0$.
For any $\x^*\neq \0$ we choose $\mathbf{y}^*=(0,\dots,1)$. Then 
all of the partial derivatives of $F(\x;\y)$ vanish at $(\x^*;\y^*)$, which contradicts the fact that $X$ is smooth. Thus $\det(F_\x)$ is not identically zero, so that $Z$ is indeed a proper closed subvariety of $X$.

Let $X,Y>1$, with $X\leq Y$.
Our starting point is going to be an  asymptotic formula for the number of integer solutions to $F(\mathbf{x};\mathbf{y})=0$ with $|\mathbf{x}|\leq X$ and $|\mathbf{y}|\leq Y$, restricted to the sets
$$
\mathcal{A}_1=\{\mathbf{x}\in \mathbb{Z}^n: \det(F_{\mathbf{x}})\neq 0\}, \quad \mathcal{A}_2=\{\mathbf{y}\in \mathbb{Z}^n: \det(F_{\mathbf{y}})\neq 0\}.
$$
We  put $ \mathcal{A}=\mathcal{A}_1\times \mathcal{A}_2$ and 
define  $N(\mathcal{A};X,Y)$ to be the number of vectors $\mathbf{x}\in [-X,X]\cap \mathcal{A}_1$ and $\mathbf{y}\in [-Y,Y]\cap \mathcal{A}_2$ such that $F(\mathbf{x};\mathbf{y})=0$. 

We will adopt two approaches according to the relative size of $X$ and $Y$. First, if $X$ and $Y$ are roughly of the same size we apply  Schindler's work \cite{S1}. Second, if $X$ is substantially smaller than $Y$, then for each  fixed $\mathbf{x}$ we count the number of integer solutions $\mathbf{y}$ 
using  Lemma \ref{lem:remove}, before summing over the relevant $\x$.
It will be convenient to introduce the notation
$$
u=\frac{\log X}{\log Y}.
$$
The following result is extracted from  \cite{S1}.

\begin{lem} \label{lem:4.1}
Assume that $u\leq 1$ and 
$$
 n>8\max\{3,1/u+1\}+1.
$$
There exists $\delta>0$ such that
$$
N(\mathcal{A};X,Y)=\mathfrak{S}\mathfrak{I}
X^{n-2}Y^{n-2}+O(X^{n-2-\delta}Y^{n-2}),
$$
where 
\begin{equation}\label{eq:sing-series}
\mathfrak{S}=\sum_{q=1}^{\infty}\sum_{\substack{a\bmod{q}\\ \gcd(a,q)=1}}q^{-2n}\sum_{\mathbf{x},\mathbf{y}\bmod{q}}e(aF(\mathbf{x};\mathbf{y})/q),
\end{equation}
and 
\begin{equation}\label{eq:sing-integral}
\mathfrak{I}=\int_{-\infty}^{\infty}\int_{[-1,1]^{2n}}e(-\theta F(\mathbf{x};\mathbf{y})) \mathrm{d}\mathbf{x}\mathrm{d}\mathbf{y}\mathrm{d}\theta.
\end{equation}
\end{lem}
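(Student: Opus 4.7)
The plan is to invoke the asymptotic formula of Schindler \cite{S1}, whose main theorem applies the Hardy--Littlewood circle method to bihomogeneous forms on asymmetric product boxes $[-X,X]^n \times [-Y,Y]^n$. Her result requires a Birch-type non-singularity condition, quantified by the dimensions of the two partial singular loci: $V_1^*$ (the vanishing locus of the $\mathbf{x}$-gradient of $F$) and $V_2^*$ (the vanishing locus of the $\mathbf{y}$-gradient). The first task is to verify this hypothesis in our setting. Smoothness of $X \subset \mathbb{P}^{n-1} \times \mathbb{P}^{n-1}$ forces both $\dim V_1^*$ and $\dim V_2^*$ to be bounded by a small absolute constant, since a common non-trivial zero of the two gradients would violate the Jacobian criterion at a point of $X$. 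Feeding these bounds, the bidegree $(d_1,d_2)=(2,2)$, and the asymmetry parameter $u = \log X/\log Y \leq 1$ into Schindler's inequality produces precisely the threshold $n > 8\max\{3, 1/u+1\} + 1$; the factor $8 = 2^{d_1+d_2-1}$ is the standard Weyl differencing loss for a form of total degree $4$.

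Once the non-singularity hypothesis is in hand, Schindler's theorem outputs a main term of shape $\mathfrak{S}\mathfrak{I} X^{n-2}Y^{n-2}$ and an error term of shape $O(X^{n-2-\delta} Y^{n-2})$, with the singular series and singular integral matching \eqref{eq:sing-series}--\eqref{eq:sing-integral} exactly; these arise in the usual way as $p$-adic and Archimedean densities of zeros. This already furnishes the claimed asymptotic for the \emph{unrestricted} count $\#\{(\mathbf{x},\mathbf{y}) \in [-X,X]^n \times [-Y,Y]^n : F(\mathbf{x};\mathbf{y})=0\}$, so the remaining task is to excise the contribution of pairs with $\mathbf{x} \notin \mathcal{A}_1$ or $\mathbf{y} \notin \mathcal{A}_2$, i.e.\ with $\det F_{\mathbf{x}} = 0$ or $\det F_{\mathbf{y}} = 0$.

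The main obstacle is this excision. A crude bound on the hypersurface $\{\det F_{\mathbf{x}}=0\} \subset \mathbb{A}^n$ (a proper subvariety by the argument opening this section) gives $O(X^{n-1})$ integer points, and pairing each with the trivial estimate $O(Y^{n-1})$ for zeros of the possibly degenerate quadric $F_{\mathbf{x}}(\mathbf{y})=0$ yields $O(X^{n-1}Y^{n-1})$, which overshoots the target error term by a factor $XY$. The fix I envisage is to re-run Schindler's argument with the $\mathbf{x}$-summation restricted to the hypersurface $\det F_{\mathbf{x}} = 0$: this effectively drops the dimension of the $\mathbf{x}$-box from $n$ to $n-1$ while leaving the Birch-type hypothesis in force, so her estimate now delivers a bound of shape $O(X^{n-3+\ve} Y^{n-2})$ for the excised contribution, comfortably inside the target error. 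A symmetric argument disposes of $\det F_{\mathbf{y}}=0$, and combining the two excisions with the unrestricted asymptotic completes the proof.
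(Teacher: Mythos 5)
Your first step coincides with the paper's: both proofs rest on Schindler's theorem from \cite{S1} with $n_1=n_2=n$, $d_1=d_2=2$, and both verify her Birch-type hypothesis via smoothness of $X$. One correction there: the relevant input is \cite[Lemma 2.2]{S2}, which gives $\dim V_i^*\leq n+1$ (so $2n-\dim V_i^*\geq n-1$); the loci $V_i^*$, cut out in $\mathbb{A}^{2n}$ by the vanishing of a \emph{single} gradient $\nabla_{\mathbf{x}}F$ or $\nabla_{\mathbf{y}}F$, always have dimension at least $n$ (each contains $\{\mathbf{y}=\mathbf{0}\}$, resp.\ $\{\mathbf{x}=\mathbf{0}\}$), so they are certainly not bounded by a small absolute constant, and the argument about a common zero of the two gradients controls the wrong locus (the singular locus of $X$, not the individual $V_i^*$).

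The genuine gap is your excision step. Schindler's result is a circle-method statement for summation over product boxes; there is no version of it, and no routine modification, in which the $\mathbf{x}$-variable runs over the integer points of the degree-$n$ hypersurface $\det F_{\mathbf{x}}=0$. Weyl differencing and the geometric data entering her hypothesis presuppose box summation, so the assertion that one can ``drop the dimension of the $\mathbf{x}$-box from $n$ to $n-1$ while leaving the Birch-type hypothesis in force'' has no justification, and the claimed bound $O(X^{n-3+\ve}Y^{n-2})$ does not follow from anything you cite. The bound is also genuinely delicate: there are $\asymp X^{n-1}$ bad $\mathbf{x}$ in the box, and the best height-independent estimate for the number of $|\mathbf{y}|\leq Y$ with $F_{\mathbf{x}}(\mathbf{y})=0$ for a fixed singular form of rank at least $3$ is $O(Y^{n-2+\ve})$ (with worse bounds for low rank), giving $O(X^{n-1}Y^{n-2+\ve})$ --- too large by a factor of about $X$, and Lemma \ref{lem:4.1} is needed precisely when $X$ and $Y$ are comparable, so dimension-growth bounds as in the proof of Lemma \ref{lem:4.3} do not rescue it either. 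The missing saving of $X^{-2}$ per bad $\mathbf{x}$ would require counting uniformly in the height of a \emph{singular} quadratic form, which Theorem \ref{t:2} (requiring $\Delta_F\neq 0$) does not supply. The paper avoids the problem altogether: the restriction to $\mathcal{A}$ is imposed inside the circle-method proof of \cite{S1}, where it makes little difference on the minor arcs, while on the major arcs one extends the sum to all of $\mathbb{Z}^{2n}$ at the cost of the $O(X^{n-1}Y^{n})$ lattice points outside $\mathcal{A}$ (since $\det F_{\mathbf{x}}$ and $\det F_{\mathbf{y}}$ are not identically zero) multiplied by the small total measure of the major arcs; thus only lattice points, never solutions, on the bad locus need to be counted at this stage.
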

\begin{proof}
This is a straightforward  modification of  \cite[Thm.~1.1]{S1} with the data  $R=1, n_1=n_2=n, d_1=d_2=2$ and 
$\mathfrak{B}_1=\mathfrak{B}_2=[-1,1]^n$.
Assuming that $X$ is smooth, 
it follows from \cite[Lemma 2.2]{S2} that 
$\dim V_i^*\leq n+1$ for $i=1,2$. The only thing that requires further comment is the restriction to the open subset $\mathcal{A}.$

On the minor arcs this restriction to an open set makes little difference. 
On the major arcs, however, we can extend the sum to all 
$\ZZ^{2n}$ by noting that there is a negligible contribution from
$(\x,\y)\in \ZZ^{2n}\setminus \mathcal{A}$. 
Indeed the total number of such vectors with $|\x|\leq X$ and 
$|\y|\leq Y$ is $O(X^{n-1}Y^n)$ since $\det(F_\x)$ and $\det(F_\y)$ are not identically zero. 
This completes the proof of the lemma.
\end{proof}

Next we consider the case where $X$ is substantially smaller than $Y$.  
As indicated above we shall proceed by fixing $\x$ and using Lemma \ref{lem:remove} to estimate the number of $\y$. 
Thus for fixed $\mathbf{x}$ let $N_{\mathbf{x}}(Y)$ be the number of integer vectors $|\mathbf{y}|\leq Y$ such that  $F(\x;\y)=0$.
We define
\begin{equation}\label{eq:tilde}
\widetilde N(X,Y)=\sum_{\substack{\mathbf{x}\in  \mathcal{A}_1\\ |\mathbf{x}|\leq X} }N_{\mathbf{x}}(Y),
\end{equation}
the estimation of which is the object of the following result.

\begin{lem}\label{lem:4.2}
 Let $X, Y\geq 1$ and suppose that 
$
n\geq 5.
$
Assume that $u<u_1$, where
 \begin{equation}\label{eq:u1}
u_1=\min \bigg\{\frac{n-2-\kappa_n}{3n+2+\kappa_n}, \frac{n-4+\kappa_n}{3n-\kappa_n}\bigg\}.
\end{equation}
Then  
for any $\eta<\frac{1}{4}$ there exists $\delta>0$ such that
\begin{align*}
\widetilde{N}(X,Y)=~&
\big(1+O(\eta)\big)
Y^{n-2}\sum_{\substack{\mathbf{x}\in  \mathcal{A}_1\\ |\mathbf{x}|\leq X}}\sigma_{\infty}(w_0;F_{\mathbf{x}})\mathfrak{S}(F_{\mathbf{x}})+O(X^{n-2-\delta}Y^{n-2}),
\end{align*}
where $\sigma_{\infty}(w_0;F_{\mathbf{x}})$ and $\mathfrak{S}(F_{\mathbf{x}})$ are given by Lemma \ref{lem:remove}.
\end{lem}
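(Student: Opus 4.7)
The plan is to apply Lemma \ref{lem:remove} to each individual quadratic form $F_{\mathbf{x}}$ for $\mathbf{x} \in \mathcal{A}_1$ with $|\mathbf{x}| \leq X$, and then sum over $\mathbf{x}$. The point is that on $\mathcal{A}_1$ the quadratic form $F_{\mathbf{x}}$ in $\mathbf{y}$ is non-singular, so Lemma \ref{lem:remove} applies (using $n\geq 5$), and crucially its error term has been made explicit in $\|F_{\mathbf{x}}\|$. Since $F_{\mathbf{x}}(\mathbf{y})$ has coefficients that are quadratic forms in $\mathbf{x}$ with integer coefficients, we have the trivial bound $\|F_{\mathbf{x}}\| \ll X^2$.

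First I would invoke Lemma \ref{lem:remove} with $B=Y$ and weight $w_0$ for each $\mathbf{x}\in \mathcal{A}_1$ with $|\mathbf{x}|\leq X$, obtaining
\[
N_{\mathbf{x}}(Y) = \bigl(1+O(\eta)\bigr)\sigma_{\infty}(w_0;F_{\mathbf{x}})\mathfrak{S}(F_{\mathbf{x}})Y^{n-2} + O\bigl(\|F_{\mathbf{x}}\|^{\ve}Y^{\ve}\mathcal{E}_{F_{\mathbf{x}}}(Y)\bigr).
\]
Summing over all such $\mathbf{x}$ (and using $\|F_{\mathbf{x}}\|^{\ve} \ll X^{2\ve}$) yields the stated main term, together with a total error
\[
\ll X^{\ve}Y^{\ve}\sum_{|\mathbf{x}|\leq X}\mathcal{E}_{F_{\mathbf{x}}}(Y)
\ll X^{\ve}Y^{\ve}\Bigl( X^{\frac{5n+\kappa_n}{2}-1}Y^{\frac{n-2+\kappa_n}{2}} + X^{\frac{5n-\kappa_n}{2}-2}Y^{\frac{n-\kappa_n}{2}}\Bigr),
\]
where I have used $\|F_{\mathbf{x}}\|\ll X^2$ in \eqref{eq:def-E} and estimated the number of $\mathbf{x}$ by $O(X^n)$.

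The next step is to match these against the target $O(X^{n-2-\delta}Y^{n-2})$. Writing $X=Y^u$, the first piece requires
\[
u\le \frac{n-2-\kappa_n}{3n+\kappa_n+2}
\]
and the second piece requires
\[
u\le \frac{n-4+\kappa_n}{3n-\kappa_n},
\]
so that the hypothesis $u<u_1$ with $u_1$ as in \eqref{eq:u1} allows $\delta>0$ to be chosen, and the $X^{\ve}Y^{\ve}$ slack to be absorbed.

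The main conceptual obstacle is not the summation itself, which is routine, but the fact that it relies entirely on the explicit polynomial dependence on $\|F\|$ in the error term $\mathcal{E}_F$ of Lemma \ref{lem:remove}; this is precisely the technical improvement that motivated the refinement of Heath-Brown's $\delta$-method carried out in Section 2. A minor issue worth noting is that one must justify summing the main term $\sigma_\infty(w_0;F_{\mathbf{x}})\mathfrak{S}(F_{\mathbf{x}})$ only over $\mathbf{x}\in \mathcal{A}_1$; but this is automatic since the statement of the lemma records the main term as a sum over exactly this set.
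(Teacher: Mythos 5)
Your proposal is correct and follows essentially the same route as the paper: apply Lemma \ref{lem:remove} with $B=Y$ to the non-singular form $F_{\mathbf{x}}$ for each $\mathbf{x}\in\mathcal{A}_1$, use $\|F_{\mathbf{x}}\|\ll X^2$ and the $O(X^n)$ count of relevant $\mathbf{x}$ to bound the summed error, and compare exponents with $X=Y^u$ to see that $u<u_1$ yields the $O(X^{n-2-\delta}Y^{n-2})$ bound. The exponent computations leading to the two constraints defining $u_1$ match the paper's exactly.
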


\begin{proof}
When  $\mathbf{x}\in  \mathcal{A}_1$ we note that $F_{\mathbf{x}}$ is a non-singular integral quadratic form with height $\| F_{\mathbf{x}} \|\ll |\mathbf{x}|^2$.
Applying Lemma \ref{lem:remove} in \eqref{eq:tilde}
we therefore deduce that 
$$
Y^{-(n-2)}
\widetilde{N}(X,Y)
=
\big(1+O(\eta)\big)
\sum_{\substack{\mathbf{x}\in  \mathcal{A}_1\\ |\mathbf{x}|\leq X}}\sigma_{\infty}(w_0;F_{\mathbf{x}})\mathfrak{S}(F_{\mathbf{x}})+O(\mathcal{E}),
$$
with
$$
\mathcal{E}
=X^{\frac{5n}{2}-1+\frac{\kappa_n}{2}}Y^{\frac{-n+2+\kappa_n}{2}+\ve}+X^{\frac{5n}{2}-2-\frac{\kappa_n}{2}}Y^{\frac{-n+4-\kappa_n}{2}+\ve},
$$
for any $\ve >0$. 
If $u_1$ is given by  \eqref{eq:u1} and $u=\log X/\log Y<u_1$, then it is not hard to  
 check that
$\mathcal{E}\ll  X^{n-2-\delta}$ for some $\delta>0$.
This completes the proof.
\end{proof}

We shall need to asymptotically evaluate the main term in Lemma \ref{lem:4.2}. This is the object of the following result. 

\begin{lem}\label{lem:constant}
Assume that 
$n>35$.
Then  for any $\eta<\frac{1}{4}$ there exists $\delta>0$ such that 
$$
  \sum_{\substack{\mathbf{x}\in  \mathcal{A}_1\\ |\mathbf{x}|\leq X}}
  \sigma_\infty(w_0;F_\mathbf{x})
    \mathfrak{S}(F_{\mathbf{x}})
 =\big(1+O(\eta)\big)
 \mathfrak{S}\mathfrak{I} X^{n-2}+O(X^{n-2-\delta}),
$$
where 
$\mathfrak{S}$
is given by  \eqref{eq:sing-series} and  $\mathfrak{I}$ is given by  \eqref{eq:sing-integral}.
\end{lem}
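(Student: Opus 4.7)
The plan is local--global. I shall unfold the singular series $\mathfrak{S}(F_\mathbf{x})$ into exponential sums depending on $\mathbf{x}$ only through its residue modulo $q$, and match these, on average over $\mathbf{x}$, with the biquadratic singular series $\mathfrak{S}$ from \eqref{eq:sing-series}. In parallel, I shall show that integrating $\sigma_\infty(w_0; F_\mathbf{x})$ over $|\mathbf{x}| \leq X$ recovers $X^{n-2}\mathfrak{I}$ with $\mathfrak{I}$ given by \eqref{eq:sing-integral}; this rests on the scaling $F_{X\mathbf{u}} = X^2 F_\mathbf{u}$ and the consequent identity $\sigma_\infty(w_0; X^2 F_\mathbf{u}) = X^{-2}\sigma_\infty(w_0; F_\mathbf{u})$, obtained via the change of variable $\theta \mapsto \theta/X^2$ in \eqref{eq:sigm-inf}.

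\textbf{Main term.} Writing
$$
\mathfrak{S}(F_\mathbf{x}) = \sum_{q=1}^\infty q^{-n} T_q(\mathbf{x}), \qquad T_q(\mathbf{x}) = \sum_{\substack{a \bmod q \\ (a,q)=1}} \sum_{\mathbf{y} \bmod q} e_q(a F(\mathbf{x};\mathbf{y})),
$$
I truncate the $q$-sum at $Q_0 = X^{\alpha}$ for a small $\alpha > 0$. For $q \leq Q_0$, by the periodicity of $T_q(\mathbf{x})$ modulo $q$, I split the $\mathbf{x}$-sum into residue classes:
$$
\sum_{q \leq Q_0} q^{-n} \sum_{\mathbf{r} \bmod q} T_q(\mathbf{r}) \sum_{\substack{|\mathbf{x}| \leq X,~\mathbf{x} \in \mathcal{A}_1 \\ \mathbf{x} \equiv \mathbf{r} \bmod q}} \sigma_\infty(w_0; F_\mathbf{x}).
$$
The inner sum is approximated by $q^{-n} \int_{|\mathbf{x}| \leq X} \sigma_\infty(w_0; F_\mathbf{x}) \,d\mathbf{x} = q^{-n} X^{n-2}\mathfrak{I}$, using the scaling above. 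Reassembling yields $X^{n-2}\mathfrak{I} \sum_{q \leq Q_0} q^{-2n} A(q)$, with $A(q) = \sum_{\mathbf{r} \bmod q} T_q(\mathbf{r})$; completing the $q$-sum produces $X^{n-2}\mathfrak{I}\mathfrak{S}$ up to a controllable tail.

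\textbf{Error bounds.} Three error contributions must be shown to be $O(X^{n-2-\delta})$. First, the tail $q > Q_0$: Lemma \ref{lem:upper} applied to $F_\mathbf{x}$, combined with $\|F_\mathbf{x}\| \ll X^2$ and the fact that $\det F_\mathbf{x}$ is a nonzero polynomial of degree $2n$ in $\mathbf{x}$ (shown at the start of this section), yields a power saving when $n > 35$. Second, the sum-to-integral error: $\sigma_\infty(w_0; F_\mathbf{x})$ is real-analytic on $\mathcal{A}_1$ but has an integrable singularity of size $\sim |\det F_\mathbf{x}|^{-1/n+\varepsilon}$ on approach to $\{\det F_\mathbf{x} = 0\}$ by Lemma \ref{lem:sig}; excising a tubular neighbourhood of width $\eta'$ of this locus (on which Lemma \ref{lem:sig} gives the bound) and applying a Lipschitz estimate outside produces a power-saving discrepancy after optimising $\eta'$. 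Third, the $O(X^{n-1})$ points $\mathbf{x} \in \mathbb{Z}^n \setminus \mathcal{A}_1$ with $|\mathbf{x}| \leq X$ contribute negligibly once the same integrability is used.

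\textbf{Main obstacle.} The principal technical difficulty is the sum-to-integral step in the presence of the discriminant hypersurface: naive Euler--Maclaurin fails because $\sigma_\infty(w_0; F_\mathbf{x})$ is not globally smooth, so the thickening argument above is essential. The hypothesis $n > 35$ is precisely what makes the final optimisation over $\eta'$ and $\alpha$ yield a positive $\delta$, and the $(1+O(\eta))$ factor in the statement provides the slack needed for clean compatibility with Lemma \ref{lem:4.2} when the two results are combined in the proof of Theorem \ref{t:1}.
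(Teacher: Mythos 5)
Your proposal takes a genuinely different, direct local--global route, but as sketched it has a concrete gap at the truncation step, and with only the estimates available in this paper it would not close. For the residue-class step to make sense you must truncate at $Q_0=X^{\alpha}$ with $\alpha<1$: once $q>X$, a residue class $\mathbf{x}\equiv\mathbf{r}\bmod q$ meets the box $|\mathbf{x}|\leq X$ in at most one point, so approximating the inner sum by $q^{-n}\int_{|\mathbf{x}|\leq X}\sigma_\infty(w_0;F_\mathbf{x})\,\mathrm{d}\mathbf{x}$ is meaningless. But the tail bound you invoke does not give a power saving in that range. Lemma \ref{lem:upper} carries the factor $\gcd(q^n,\Delta_{F_\mathbf{x}})^{1/2}$, and $\Delta_{F_\mathbf{x}}=\det F_\mathbf{x}$ is typically of size $X^{2n}$; even the averaged bound of Lemma \ref{lem:av2} (with $\mathbf{c}=\mathbf{0}$) only yields $\sum_{q>Q_0}q^{-n}|T_q(\mathbf{x})|\ll X^{n-2+\kappa_n+\ve}Q_0^{2-\frac{n}{2}-\frac{\kappa_n}{2}}$ for an individual $\mathbf{x}$. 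Summing over the $\asymp X^n$ admissible $\mathbf{x}$ (with $\sigma_\infty(w_0;F_\mathbf{x})\ll1$ from Lemma \ref{lem:sig}), the tail is only $O(X^{n-2-\delta})$ once $Q_0\gg X^{2(n+\kappa_n)/(n-4+\kappa_n)}$, i.e.\ $Q_0$ of size about $X^{2}$ --- far beyond the range where your sum-to-integral step exists. The fact that $\det F_\mathbf{x}$ is a nonzero polynomial does not help here: what is needed is an averaged statement about how often $\det F_\mathbf{x}$ has large or powerful common factors with $q$ as $\mathbf{x}$ runs over the box (or, equivalently, cancellation in $\sum_{\mathbf{x}}T_q(\mathbf{x})$ in the intermediate range $X^{\alpha}<q\ll X^{2}$), and no such argument appears in your sketch. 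Relatedly, your claim that $n>35$ is ``precisely what makes the final optimisation work'' is not supported by any computation; nothing in your outline visibly uses it.

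For comparison, the paper sidesteps every one of these issues by a double count: $\widetilde N(X,Y)$ is evaluated once by Lemma \ref{lem:4.2} (fibring over $\mathbf{x}$, valid for $u<u_1$) and once by a variant of Lemma \ref{lem:4.1} applied to the bihomogeneous form (valid for $n>8/u+9$), and the two asymptotics are compared at $Y=X^{1/u}$; dividing by $Y^{n-2}$ gives exactly the assertion of the lemma, and $n>35$ is precisely the condition $n>8/u_1+9$ that makes the two ranges of $u$ overlap. Your archimedean identification (the scaling $\sigma_\infty(w_0;F_{X\mathbf{u}})=X^{-2}\sigma_\infty(w_0;F_{\mathbf{u}})$ and the formal matching of $\int_{[-1,1]^n}\sigma_\infty(w_0;F_{\mathbf{u}})\,\mathrm{d}\mathbf{u}$ with $\mathfrak{I}$) is the right computation, but even there you would still have to justify Fubini and the local integrability of $|\det F_{\mathbf{u}}|^{-1/n+\ve}$ along the discriminant locus, and then carry out a quantitative lattice-point-versus-integral comparison for the non-smooth function $\mathbf{x}\mapsto\sigma_\infty(w_0;F_\mathbf{x})$, uniformly in residue classes modulo $q$; each of these is real work, and none of it is needed in the paper's argument. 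As it stands, the proposal does not constitute a proof.
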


\begin{proof}
Let $X,Y\gg 1$ and put $u=\log X/\log Y$ are usual. We assume that 
$n\geq 5$ and $u<u_1$ in the notation of \eqref{eq:u1}. 
Then for any $\eta<\frac{1}{4}$ it follows from Lemma \ref{lem:4.2} that 
$$
\widetilde{N}(X,Y)
=\big(1+O(\eta)\big)
Y^{n-2}\sum_{\substack{\mathbf{x}\in  \mathcal{A}_1\\ |\mathbf{x}|\leq X}}\sigma_{\infty}(w_0;F_{\mathbf{x}})\mathfrak{S}(F_{\mathbf{x}})+O(X^{n-2-\delta}Y^{n-2}),
$$
for some $\delta>0$.
We may also treat the left hand side via an easy modification of the proof of 
Lemma \ref{lem:4.1}, in which 
the assumption on $n$ can be reduced to $n>8/u+9$, since $n>35$.
Assuming further  that  $u\leq 1$ this 
shows that 
$$
\widetilde{N}(X,Y)
=\mathfrak{S}\mathfrak{I}
X^{n-2}Y^{n-2}+O(X^{n-2-\delta}Y^{n-2}),
$$
for some $\delta>0$.
We want to compare these two estimates for $Y=X^{\frac{1}{u}}$, for a suitable parameter 
 $u\in [0,1]$ that satisfies $u<u_1$ and  $n>8/u+9$.
 But  $
 n>8/u_1+9$ if and only if $n>35$, and this is therefore 
 the condition under which the 
 asymptotic formula in the lemma is valid.
\end{proof}

We are now ready to deduce an estimate for $N(\mathcal{A};X,Y)$ which is valid for any $X$ and $Y$. 

\begin{lem}\label{lem:4.3}
Assume that  $n>35$.
Then for any $\eta<\frac{1}{4}$ 
there exists $\delta>0$ such that
$$
N(\mathcal{A};X,Y)=\sigma X^{n-2}Y^{n-2}+O(\eta X^{n-2}Y^{n-2})+O(X^{n-2-\delta}Y^{n-2}),
$$
where $\sigma=\mathfrak{S}\mathfrak{I}$.
\end{lem}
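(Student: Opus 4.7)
My plan is to case-split on $u := \log X / \log Y$, reducing to $u \leq 1$ by symmetry. The swap $(\x,\y) \leftrightarrow (\y,\x)$ exchanges $\mathcal{A}_1 \leftrightarrow \mathcal{A}_2$ and $X \leftrightarrow Y$, while fixing both $N(\mathcal{A};X,Y)$ and $\sigma = \mathfrak{S}\mathfrak{I}$; in the range $X \leq Y$ the stated error $O(X^{n-2-\delta} Y^{n-2})$ dominates its counterpart $O(X^{n-2} Y^{n-2-\delta})$, so no generality is lost. I would then split further into a balanced regime $u_1 < u \leq 1$ and an imbalanced regime $u \leq u_1$.

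For the balanced regime I would apply Lemma~\ref{lem:4.1} directly. Its hypothesis $n > 8\max\{3, 1/u+1\}+1$ reduces to $n > 25$ when $u \geq 1/2$ and to $n > 8/u+9$ when $u < 1/2$; for $u > u_1$ the latter follows from $n > 35$ and the equivalence $n > 8/u_1+9 \Leftrightarrow n > 35$ noted in the proof of Lemma~\ref{lem:constant}. The resulting estimate is even stronger than the claim, with no $\eta$-dependent term.

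For the imbalanced regime I would chain Lemmas~\ref{lem:4.2} and \ref{lem:constant}. Lemma~\ref{lem:4.2} gives
\[
\widetilde N(X,Y) = (1+O(\eta))\, Y^{n-2}\!\!\sum_{\substack{\x \in \mathcal{A}_1 \\ |\x| \leq X}}\!\!\sigma_\infty(w_0;F_\x)\mathfrak{S}(F_\x) + O(X^{n-2-\delta_1} Y^{n-2}),
\]
and Lemma~\ref{lem:constant} rewrites the inner sum as $(1+O(\eta))\sigma X^{n-2} + O(X^{n-2-\delta_2})$. Multiplying out and collecting errors produces the target asymptotic for $\widetilde N(X,Y)$. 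To conclude Lemma~\ref{lem:4.3} it remains to control
\[
D(X,Y) = \#\{(\x,\y) : \x \in \mathcal{A}_1,\ \y \notin \mathcal{A}_2,\ |\x| \leq X,\ |\y| \leq Y,\ F(\x;\y)=0\}
\]
and show $D(X,Y) = O(X^{n-2-\delta} Y^{n-2})$.

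The main obstacle is this last step, since a trivial hyperplane bound gives only $D(X,Y) \ll X^{n-1} Y^{n-1}$, which is far too weak. For each fixed $\x \in \mathcal{A}_1$, however, $\{F_\x = 0\}$ is a smooth projective quadric in $\PP^{n-1}_\y$ of dimension $n-2$, and its intersection with the degree-$2n$ hypersurface $\{\det F_\y=0\}$ (a non-trivial polynomial in $\y$, by the argument at the opening of \S\ref{s:4} applied with $\x$ and $\y$ interchanged) has projective dimension generically $n-3$. A uniform Heath-Brown/Salberger/Broberg-type bound on integer points of bounded-degree varieties then gives $O(Y^{n-3+\ve})$ such $\y$ per $\x$, whence $D(X,Y) \ll X^n Y^{n-3+\ve}$. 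Since $u \leq u_1 < 1/3$, the ratio $X^{2+\delta'}\!/Y^{1-\ve} = Y^{u(2+\delta')-1+\ve}$ decays for suitable small $\delta',\ve$, yielding the required $D(X,Y) = O(X^{n-2-\delta} Y^{n-2})$. The delicate point is ensuring the intersection has projective dimension at most $n-3$ \emph{uniformly} in $\x \in \mathcal{A}_1$, which requires ruling out exceptional $\x$ for which $\{F_\x=0\} \subset \{\det F_\y=0\}$ -- a geometric input that will rely on smoothness of $X$.
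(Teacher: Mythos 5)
Your decomposition is exactly the paper's: reduce to $X\le Y$ by symmetry, apply Lemma \ref{lem:4.1} when $u\ge u_1$ (with the same hypothesis check via $n>8/u_1+9\Leftrightarrow n>35$), and for $u<u_1$ chain Lemmas \ref{lem:4.2} and \ref{lem:constant}, then bound the discrepancy coming from $\y\notin\mathcal{A}_2$ by fixing $\x\in\mathcal{A}_1$ and counting integer points on the auxiliary variety $\{F_{\x}(\y)=\det(F_{\y})=0\}$ with a uniform dimension-growth bound. The paper does this via \cite[Cor.~2]{pila} with $m=n-3$ and $\ve=\frac{35}{36}-\frac{5}{3\sqrt{3}}$, getting $O(Y^{n-3+\frac{2}{9}})$ per $\x$ (implied constant depending only on $n$) and needing only $u<7/18$, which your $u\le u_1\le 1/3$ computation mirrors.

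Two remarks on your final step. First, a uniform bound of the shape $O(Y^{n-3+\ve})$ requires not only that the intersection have projective dimension $n-3$ but also that it have no linear components of that dimension: a linear $(n-3)$-plane would by itself contribute $\gg Y^{n-2}$ integer points in the box. The paper supplies this by observing that the nonsingular quadric $F_{\x}=0$, of dimension $n-2$, contains no linear subspaces of dimension $n-3$ for $n\ge 5$; you omitted this condition, though it is easy to add. Second, the point you flag as delicate --- ruling out $\x\in\mathcal{A}_1$ with $\{F_{\x}=0\}\subseteq\{\det F_{\y}=0\}$, i.e.\ $F_{\x}\mid\det F_{\y}$ --- is precisely the step the paper dispatches in one line, by asserting that the two forms define a complete intersection of codimension $2$ in $\PP^{n-1}$; beyond the fact (established at the start of \S\ref{s:4}, by symmetry) that $\det(F_{\y})$ is not identically zero, no further appeal to smoothness of $X$ is made there. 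So your proposal is the paper's argument with this properness claim left open rather than asserted; if you wish to justify it more fully than the paper does, you must either show that for smooth $X$ no nonsingular $F_{\x}$ divides $\det F_{\y}$, or at least that the exceptional $\x$ lie in a subset of codimension at least $2$ in $\AA^n$, since a codimension-one family of such $\x$ would contribute $\gg X^{n-1}Y^{n-2}$ to the discrepancy, which is too large for the stated error term.
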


\begin{proof}
By symmetry we may assume that $X\leq Y$.
Let $u=\log X/\log Y\leq 1$
and recall the definition \eqref{eq:u1} of $u_1$.
The case  $u\geq u_1$ follows directly from  Lemma~\ref{lem:4.1},
 since one has $n>8\max\{3, 1/u_1+1\}+1$ if and only if $n>35$.

Suppose next that  $u< u_1$.
It follows from  Lemmas   \ref{lem:4.2}  and \ref{lem:constant} that 
\begin{align*}
\widetilde{N}(X,Y)=\sigma X^{n-2}Y^{n-2} +O(\eta X^{n-2}Y^{n-2})+O(X^{n-2-\delta}Y^{n-2}).
\end{align*}
It remains to  examine the difference
\begin{align*}
\widetilde{N}(X,Y)-N(\mathcal{A};X,Y)
&\leq 
\sum_{\substack{\mathbf{x}\in \mathbb{Z}^n\\ |\x|\leq X}}
\widetilde{N}_{\mathbf{x}}(Y),
\end{align*}
where 
  $\widetilde{N}_{\mathbf{x}}(Y)$ is the  number of  $\mathbf{y}\in 
  \ZZ^n\setminus \mathcal{A}_2$ such that $|\y|\leq Y$ and 
  $F(\x;\y)=0$.
 This is the number of integer vectors $|\y|\leq Y$ for which $F_\x(\y)=0$ and 
 $\Delta(\y)=0$, where  $F_\x$ is a non-singular  quadratic form and 
 $\Delta(\y)=\det(F_\y)$ is a non-zero form. These forms together define a complete intersection in $\PP^{n-1}$ of codimension $2$. Furthermore, 
 if it is reducible then it can't contain any linear components since the quadric $F_\x=0$ 
 doesn't contain  linear hyperplanes of codimension $1$.
Taking $m=n-3$ and $\ve=\frac{35}{36}-\frac{5}{3\sqrt{3}}$ in 
\cite[Cor.~2]{pila} it therefore follows that 
$\widetilde{N}_{\mathbf{x}}(Y)=O(Y^{n-3+\frac{2}{9}})$, where the implied constant only depends on $n$. This shows that 
$$
\widetilde{N}(X,Y)-N(\mathcal{A};X,Y)\ll X^{n}Y^{n-3+\frac{2}{9}}\ll X^{n-2-\delta}Y^{n-2}
$$
for some $\delta>0$, provided that $u<7/18$. But clearly $u_1\leq 1/3<7/18$.
The statement of the lemma now follows.
\end{proof}

We are now ready to complete the proof of Theorem \ref{t:1}.
We have 
\begin{align*}
N_U(B)=~&
\frac{1}{4}\#\left\{
(\x,\y)\in \mathcal{A}: 
\begin{array}{l}
\gcd(x_1,\dots,x_n)=\gcd(y_1,\dots,y_n)=1\\
|\x||\y|\leq B^{\frac{1}{n-2}},~
F(\x;\y)=0
\end{array}{}
\right\},
\end{align*}
on taking into account the action of the units $\{\pm 1\}$ on $\PP^{n-1}(\QQ)\times \PP^{n-1}(\QQ)$.
We take care of the coprimality conditions by using the M\"obius function. This leads to the expression
\begin{equation}\label{eq:finale}
N_U(B)=
\frac{1}{4}\hspace{-0.1cm}
\sum_{k_1,k_2=1}^\infty
\hspace{-0.1cm}
\mu(k_1)\mu(k_2)
\#\left\{
(\x,\y)\in \mathcal{A}: 
|\x||\y|\leq R,~
F(\x;\y)=0
\right\},
\end{equation}
where we have set 
$R=B^{\frac{1}{n-2}}/(k_1k_2)$.
Let $M(R)$ denote the inner cardinality.
We handle this by splitting into dyadic intervals and applying Lemma \ref{lem:4.3}.

Let
$
X_i=(1+\xi)^i$ and $Y_j=(1+\xi)^j$ for $i,j\geq 1$,
for a small parameter $\xi$. 
Then
$$
M(R)=\sum_{  (i,j)\in \mathcal{K}(R)}
\#\left\{ (\mathbf{x},\mathbf{y})\in {\mathcal{A}}:
           \begin{array}{ll}
            X_{i-1}<|\mathbf{x}|\leq X_i, ~
         Y_{j-1}<|\mathbf{y}|\leq Y_j  \\
             F(\mathbf{x};\mathbf{y})=0
           \end{array}
         \right\},
$$
where $\mathcal{K}(R)$ is the set of indices $(i,j)\in \NN^2$ such that 
$X_{i-1}Y_{j-1}\leq R$.
Recall the definition of $N(\mathcal{A};X,Y)$ from the start of \S \ref{s:4}.
Assuming that $n>35$, it now follows from Lemma \ref{lem:4.3} that 
\begin{align*}
M(R)
=~&\sum_{(i,j)\in \mathcal{K}(R)}
\sum_{\kappa_1,\kappa_2\in \{0,1\}} (-1)^{\kappa_1+\kappa_2}
N_{\mathcal{A}}(X_{i-\kappa_1},Y_{j-\kappa_2})\\
=~&
\sigma 
\sum_{ (i,j)\in \mathcal{K}(R)}
\left(1-2(1+\xi)^{-(n-2)}+(1+\xi)^{-2(n-2)}\right) (1+\xi)^{(i+j)(n-2)}\\
&+
\sum_{ (i,j)\in \mathcal{K}(R)}
O_{\xi}(\eta (1+\xi)^{(i+j)(n-2)})\\
&+\sum_{ (i,j)\in \mathcal{K}(R)}
O_{\xi}\left(
(1+\xi)^{(i+j)(n-2)}\min\{(1+\xi)^{i},(1+\xi)^{j}\}^{-\delta}\right)\\
=~& \Sigma+O_{\xi}(
\eta R^{n-2}\log R+
 R^{n-2}),
\end{align*}
say.
We note that 
\begin{align*} 
\Sigma
=~& \left(1-\frac{1}{(1+\xi)^{n-2}}\right)^2\sigma\sum_{ 1\leq i\leq \frac{\log R}{\log (1+\xi)}+2} (1+\xi)^{(n-2)i}
\hspace{-0.3cm}
\sum_{ 1\leq j\leq \frac{\log R}{\log (1+\xi)}+2-i} 
(1+\xi)^{(n-2)j}\\
=~&
\left(\frac{(1+\xi)^{n-2}-1}{(1+\xi)^{2(n-2)}}\right)
\sigma\sum_{ 1\leq i\leq \frac{\log R}{\log (1+\xi)}+2} \left(R^{n-2}-(1+\xi)^{i(n-2)}\right).
\end{align*}
Hence $\Sigma=cR^{n-2}\log R+O_{\xi}(R^{n-2})$, with 
\begin{align*}
c&=
\left(\frac{(1+\xi)^{n-2}-1}{(1+\xi)^{2(n-2)}}\right)
\frac{1}{\log (1+\xi)}\sigma\\
&=\frac{(n-2)\xi(1+O(\xi))}{\log (1+\xi)}\sigma\\
&=(n-2)\sigma+O(\xi),
\end{align*}
for small $\xi$.
Thus it follows that 
$$
\Sigma=(n-2)\sigma R^{n-2}\log R+O(\xi R^{n-2}\log R)+O_{\xi}(R^{n-2}).
$$

Returning to \eqref{eq:finale}
and executing the sum over $k_1$ and $k_2$,
we are finally led to the conclusion that 
$$
N_{U}(B)=\frac{1}{4}\frac{1}{\zeta(n-2)^2}\sigma B\log B+O(\xi B\log B)+O(\eta^{1/2} B\log B) +O_\xi(B),
$$
for any parameter $\xi>0$ and any $\eta\in (0,\frac{1}{4})$.
It is clear that this gives an asymptotic formula
$$
N_{U,H}(B)\sim \frac{1}{4}\frac{1}{\zeta(n-2)^2}\sigma B\log B,
$$
as $B\rightarrow \infty$.
Finally, we note that the leading constant exactly matches the prediction by Peyre \cite{P}, thanks to  an argument of   Schindler \cite[\S 3]{S2}.
This therefore concludes the proof of  Theorem \ref{t:1}.

\end{document}